\newcommand{\R}{\mathbb{R}}
\newcommand{\Z}{\mathbb{Z}}
\newcommand{\N}{\mathbb{N}}
\newcommand{\C}{\mathbb{C}}
\newcommand{\T}{\mathbb{T}}
\newcommand{\classeC}{\mathcal{C}}
\renewcommand{\Re}{\textnormal{Re}}
\newcommand{\un}{\mathds{1}}
\newcommand{\half}{\frac{1}{2}}
\newcommand{\hilbert}{H}
\newcommand{\hamilton}{\mathcal{H}}
\newcommand{\intt}{\frac{1}{2\pi}\int_0^{2\pi}}
\newcommand{\li}{\llbracket}
\newcommand{\ri}{\rrbracket}
\renewcommand\d{\,{\mathrm d}}
\newcommand\e{\,{\mathrm e}}
\newcommand{\longrightarroww}[2] {\mathop{\longrightarrow}\limits_{#1}^{#2}}
\newtheorem{mydef}{Definition}[section]
\newtheorem{thm}[mydef]{Theorem}
\newtheorem{lem}[mydef]{Lemma}
\newtheorem{prop}[mydef]{Proposition}
\newtheorem{cor}[mydef]{Corollary}
\newtheorem{rk}[mydef]{Remark}
\title{The third order Benjamin-Ono equation on the torus : well-posedness, traveling waves and stability.}
\author{Louise Gassot}
\date{}
\newcommand{\Addresses}{{
  \footnotesize
\noindent
  \textsc{Département de mathématiques et applications, École normale supérieure, CNRS, PSL University, 75005 Paris, France\\Université Paris-Saclay, CNRS, Laboratoire de mathématiques d’Orsay, 91405, Orsay, France}\par\nopagebreak
  \noindent
  \textit{E-mail address :} \texttt{louise.gassot@math.u-psud.fr}
}}
\begin{document}
\maketitle
\abstract{
We consider the third order Benjamin-Ono equation on the torus
\[
\partial_tu
	= \partial_x \left( -\partial_{xx}u-\frac{3}{2}u H\partial_x u - \frac{3}{2}H(u\partial_x u) + u^3 \right).
\]
We prove that for any $t\in\R$, the flow map continuously extends to $H^s_{r,0}(\T)$ if $s\geq 0$, but does not admit a continuous extension to $H^{-s}_{r,0}(\T)$ if $0<s<\half$. Moreover, we show that the extension is not weakly sequentially continuous in $L^2_{r,0}(\T)$. We then classify the traveling wave solutions for the third order Benjamin-Ono equation in $L^2_{r,0}(\T)$ and study their orbital stability.
}

\tableofcontents

\section{Introduction}

We are interested in the third equation of the integrable Benjamin-Ono hierarchy on the torus
\begin{equation}\label{eq:bo4}
\partial_tu=\partial_x\left(-\partial_{xx}u-\frac{3}{2}u\hilbert\partial_x u-\frac{3}{2}\hilbert(u\partial_x u)+u^3\right).
\end{equation}
The operator $H$ is the Hilbert transform, defined as
\[
\hilbert f(x)=\sum_{n\in\Z\setminus{0}}-i\,\mathrm{sgn}(n)\widehat{f}(n)\e^{inx},
	\quad f=\sum_{n\in\Z}\widehat{f}(n)\e^{inx},
	\quad \widehat{f}(n)=\intt f(x)\e^{-inx}\d x.
\]

\subsection{Benjamin-Ono equations and integrability}

The Benjamin-Ono equation on the torus
\[
\partial_t u=\hilbert \partial_{xx}u-\partial_x(u^2),
\]
was introduced by Benjamin \cite{Benjamin1967} and Ono \cite{Ono1977} in order to describe long internal waves in a two-layer fluid of great depth.
This equation admits an infinite number of conserved quantities $\hamilton_k,k\geq 1$ (see Nakamura \cite{Nakamura1979} for a proof on the real line). The evolution equations associated to the conservation laws
\begin{equation}\label{eq:BOhierarchy}
\partial_t u=\partial_x(\nabla \hamilton_k(u))
\end{equation}
are the equations for the Benjamin-Ono hierarchy \cite{Matsuno1984}.

From Nakamura \cite{Nakamura1979-2} and Bock, Kruskal \cite{BockKruskal1979}, we know that the Benjamin-Ono equation admits a Lax pair
\[
\frac{\d}{\d t}L_u=[B_u,L_u],
\]
\[
L_u=Dh-T_u,	\quad B_u=iD^2+2iT_{D(\Pi u)}-2iDT_u.
\]
Here, $ D=-i\partial_x$ and $T_u$ is the Toeplitz operator on the Hardy space
\[
L^2_+(\T)=\{h\in L^2(\T)\mid \forall n<0,\quad \widehat{h}(n)=0\}
\]
defined as
\[
T_u:h\in L^2_+(\T)\mapsto \Pi(uh)\in L^2_+(\T),
\]
and $\Pi:L^2(\T)\to L^2_+(\T)$ is the Szeg\H{o} projector.
The Hamiltonians $\hamilton_k(u)$ are defined from the Lax operator $L_u$ as
\begin{equation}\label{eq:Hk_Lax}
\hamilton_k(u)=\langle L_u^{k}\un|\un\rangle.
\end{equation}
In particular, the Hamiltonian for equation \eqref{eq:bo4} is
\begin{equation}\label{eq:hamilton4}
\hamilton_4(u)+\half \hamilton_2(u)^2=\intt \left( \half (\partial_x u)^2-\frac{3}{4}u^2\hilbert\partial_xu+\frac{1}{4}u^4\right)\d x.
\end{equation}

In \cite{GerardKappeler2019}, Gérard and Kappeler constructed global Birkhoff coordinates for the Benjamin-Ono equation on the torus. In these coordinates, the evolution equations for the Benjamin-Ono hierarchy are easier to understand. Indeed, denote by $\Phi$ the Birkhoff map
\[
\Phi:u\in L^2_{r,0}(\T)\mapsto (\zeta_n(u))_{n\geq1}\in h^{\half}_+,
\]
where $L^2_{r,0}(\T)$ is the subspace of real valued functions in $L^2(\T)$ with zero mean, and 
\[
h^{\half}_+
	=\Big\{(\zeta_n)_{n\geq 1}\mid \sum_{n\geq 1}n|\zeta_n|^2<+\infty\Big\}.
\]
Then in the Birkhoff coordinates, equation \eqref{eq:BOhierarchy} of the hierarchy associated to $\hamilton_k$ 
becomes
\[
\partial_t \zeta_n=i\omega^{(k)}_n\zeta_n,
	\quad n\geq 1
\]
when the frequencies
\[
\omega^{(k)}_n
	=\frac{\partial (\hamilton_k\circ\Phi^{-1})}{\partial |\zeta_n|^2}
\]
are well-defined. For instance, this formula is valid if the sequence $\zeta(0)=(\zeta_n(0))_{n\geq 1}$ only has a finite number of nonzero terms, or in other words, if $\Phi^{-1}(\zeta(0))$ is a finite gap potential.
In this case, the frequencies $\omega^{(k)}_n$ only depend on the actions $|\zeta_p|^2$, and the evolution simply reads
\[
\zeta_n(t)=\zeta_n(0)\e^{i\omega^{(k)}_n(\zeta(0))t},
	\quad t\in\R,
	\quad n\geq 1.
\]
For the third equation of the hierarchy \eqref{eq:bo4}, the frequencies write
\begin{equation}\label{eq:omega_n^4}
\omega_n^{(4)}(\zeta)
	=n^3+n\sum_{p\geq 1}p|\zeta_p|^2-3\sum_{p\geq1}\min(p,n)^2|\zeta_p|^2+3\sum_{p,q\geq 1}\min(p,q,n)|\zeta_p|^2|\zeta_q|^2.
\end{equation}
More details about the frequencies $\omega_n^{(k)}$ and formula \eqref{eq:omega_n^4} can be found in Appendix~\ref{part:hierarchy}.

We refer to Saut \cite{Saut2018} for a detailed survey of the Benjamin-Ono equation and of its hierarchy.

\subsection{Main results}

Our first main result is the determination the well-posedness threshold for the third order Benjamin-Ono equation. For $s\in\R$, we use the notation 
\[
H^{-s}_{r,0}(\T)=\{u\in H^s(\T,\R)\mid \langle u|\un\rangle =0\}.
\]
We prove that the flow map is globally $\classeC^0$-well-posed (in the sense of Definitions 1 and 2 from \cite{GerardKappelerTopalov2019}) in $H^{s}_{r,0}(\T)$ when $s\geq 0$, but is not globally $\classeC^0$-well-posed in $H^{-s}_{r,0}(\T)$ when $0<s<\half$.

\begin{thm}
For all $t\in\R$, the flow map for equation \eqref{eq:bo4} $\mathcal{S}^t:u_0\mapsto u(t)$, defined for finite gap potentials, admits a continuous extension to $H^s_{r,0}(\T)$ for all $s\geq0$, but does not admit a continuous extension to $H^{-s}_{r,0}(\T)$ for $0<s<\half$.
\end{thm}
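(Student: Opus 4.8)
The plan is to transport everything to the Birkhoff coordinates $\zeta=\Phi(u)$, where the flow of \eqref{eq:bo4} is explicit, $\zeta_n(t)=\zeta_n(0)\e^{i\omega_n^{(4)}(\zeta(0))t}$ with the frequencies \eqref{eq:omega_n^4}. Assuming, as is known for the Benjamin--Ono hierarchy, that $\Phi$ induces homeomorphisms $H^s_{r,0}(\T)\to h^{s+\half}_+$ on the relevant range of $s$ (writing $h^{\sigma}_+=\{(\zeta_n)_{n\geq1}\mid\sum_{n\geq1}n^{2\sigma}|\zeta_n|^2<\infty\}$), the question reduces to the continuity of the purely phase map $S^t\colon(\zeta_n)\mapsto(\zeta_n\e^{i\omega_n^{(4)}(\zeta)t})$ on these sequence spaces, since $\mathcal{S}^t=\Phi^{-1}\circ S^t\circ\Phi$. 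Crucially, $S^t$ preserves each action $|\zeta_n|^2$, so only the continuous dependence of the phases $\omega_n^{(4)}(\zeta)$ is at stake.

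For the extension to $H^s_{r,0}(\T)$, $s\geq0$, I would take $\zeta^{(k)}\to\zeta$ in $h^{s+\half}_+$ and bound $\|S^t\zeta^{(k)}-S^t\zeta\|_{h^{s+\half}_+}$ by writing, with $\omega_n^{(k)}:=\omega_n^{(4)}(\zeta^{(k)})$ and $\omega_n:=\omega_n^{(4)}(\zeta)$,
\[
\zeta_n^{(k)}\e^{i\omega_n^{(k)}t}-\zeta_n\e^{i\omega_n t}=(\zeta_n^{(k)}-\zeta_n)\e^{i\omega_n^{(k)}t}+\zeta_n\bigl(\e^{i\omega_n^{(k)}t}-\e^{i\omega_n t}\bigr).
\]
The first summand contributes exactly $\|\zeta^{(k)}-\zeta\|_{h^{s+\half}_+}\to0$. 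For the second, I would cut the series at a large $N$: the tail $\sum_{n>N}n^{2s+1}|\zeta_n|^2$ is small because $\zeta\in h^{s+\half}_+$, and there one uses only $|\e^{ia}-\e^{ib}|\leq2$; the head is a finite sum whose every term vanishes as $k\to\infty$, because for each fixed $n$ the frequency $\omega_n^{(4)}$ is continuous on $h^{\half}_+$ (the linear piece $n\sum_p p|\zeta_p|^2$ is the squared $h^{\half}_+$-norm, while the sums carrying the weights $\min(p,n)^2\leq np$ and $\min(p,q,n)\leq n$ are continuous quadratic and quartic forms dominated by the $h^{\half}_+$-norm). Since $h^{s+\half}_+\hookrightarrow h^{\half}_+$ for $s\geq0$, this gives $S^t\zeta^{(k)}\to S^t\zeta$, hence continuity of $\mathcal{S}^t$ on $H^s_{r,0}(\T)$.

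For the non-extension to $H^{-s}_{r,0}(\T)$, $0<s<\half$ and $t\neq0$ (the case $t=0$ being the identity), I would exploit that the linear piece $n\sum_p p|\zeta_p|^2=n\|u\|_{L^2}^2$ of \eqref{eq:omega_n^4} is \emph{not} continuous for the $H^{-s}$ topology, since $L^2$-mass can escape to infinite frequency. Fix $c\neq0$ and $\Delta\in\R$ with $\Delta t\notin2\pi\Z$, pick $N_k\to\infty$, and let $u_0^{(k)}:=\Phi^{-1}(\zeta^{(k)})$ be the finite-gap data with $\zeta_1^{(k)}=c$, $|\zeta_{N_k}^{(k)}|^2=\Delta/N_k$, and all other coordinates zero. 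Then $\sum_n n^{1-2s}|\zeta_n^{(k)}|^2=|c|^2+\Delta N_k^{-2s}\to|c|^2$, so $u_0^{(k)}\to u_0:=\Phi^{-1}(c,0,0,\dots)$ in $H^{-s}_{r,0}(\T)$; yet $\sum_p p|\zeta_p^{(k)}|^2=|c|^2+\Delta$ for every $k$, whence \eqref{eq:omega_n^4} yields $\omega_1^{(4)}(\zeta^{(k)})\to(1-2|c|^2+3|c|^4)+\Delta=\omega_1^{(4)}(c,0,\dots)+\Delta$. Because the actions are conserved and the lone high mode still obeys $N_k^{1-2s}|\zeta_{N_k}^{(k)}|^2\to0$ at time $t$ as well, $\mathcal{S}^t(u_0^{(k)})$ converges in $H^{-s}_{r,0}(\T)$, but its first Birkhoff coordinate tends to $c\,\e^{i(\omega_1^{(4)}(c,0,\dots)+\Delta)t}$, differing from the first coordinate $c\,\e^{i\omega_1^{(4)}(c,0,\dots)t}$ of $\mathcal{S}^t(u_0)$. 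As $u_0$ is itself a one-gap potential, a continuous extension would be forced to coincide with $\mathcal{S}^t$ at $u_0$, a contradiction.

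The heart of the matter, and the main obstacle, is this negative direction: isolating the term $n\|u\|_{L^2}^2$ as the precise carrier of the instability, and then making the heuristic rigorous. This demands the extension of the Birkhoff map to a homeomorphism $H^{-s}_{r,0}(\T)\to h^{\half-s}_+$ for $0<s<\half$, so that convergence of both data and solutions may be read off the coordinates, together with the validity of \eqref{eq:omega_n^4} along the finite-gap approximants. Once these structural inputs are in place, the remaining phase estimates in both directions are elementary; all the genuine difficulty sits in the low-regularity theory of $\Phi$ and in the bookkeeping that confines the escaping mass to a single high frequency which vanishes in $H^{-s}$ while nonetheless shifting the principal frequency by the fixed amount $\Delta$.
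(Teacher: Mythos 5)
Your proof is correct. The positive half ($s\geq0$) is essentially the paper's own argument: transport to Birkhoff coordinates via the Gérard--Kappeler--Topalov homeomorphism $\Phi:H^s_{r,0}(\T)\to h^{\half+s}_+$, split off $\|\zeta^{(k)}-\zeta\|_{h^{\half+s}_+}$, cut the remaining series at a large $N$, and use pointwise convergence of the frequencies; this is exactly the paper's convergence lemma. The negative half, however, takes a genuinely different route. The paper fixes an \emph{arbitrary} $u_0\in H^{-s}_{r,0}(\T)\setminus L^2_{r,0}(\T)$, renormalizes the divergent part of \eqref{eq:omega_n^4} by a Galilean shift ($v_k(t)=u_k(t,\cdot-\tau_k t)$ with $\tau_k=\sum_p p|\zeta_p(u_0^k)|^2$, using the translation identity $\zeta_n(u(\cdot+\tau))=\zeta_n(u)\e^{in\tau}$), and then rigs the finite-gap approximants so that $\e^{i\tau_k n t}=(-1)^k$ oscillates, contradicting the weak sequential continuity of $\Phi$. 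You instead anchor the counterexample at a smooth one-gap potential and let a single high mode of fixed $L^2$-mass escape to infinity: the data converge strongly in $H^{-s}$ (since $N_k^{1-2s}\cdot\Delta/N_k\to0$) while the first frequency shifts by the fixed amount $\Delta$, so the solutions converge strongly in $H^{-s}$ to the \emph{wrong} one-gap potential. Your mechanism is in fact the one the paper deploys for a different statement --- the failure of weak sequential continuity in $L^2_{r,0}(\T)$, where the bump $\alpha/k$ at mode $k$ only gives weak convergence of the data --- transplanted to $H^{-s}$, where the escaping mass gives strong convergence. What each buys: your argument is shorter, needs neither the Galilean renormalization nor the weak-continuity theorem for $\Phi$, and shows discontinuity even at smooth (one-gap) data; the paper's heavier argument shows more, namely that continuity fails at \emph{every} point of $H^{-s}_{r,0}(\T)\setminus L^2_{r,0}(\T)$, so no local continuous extension exists near any such point. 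Either suffices for the theorem as stated.

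Two trivial repairs. First, you must take $\Delta>0$, since $\Delta/N_k$ is the modulus squared $|\zeta_{N_k}^{(k)}|^2$; this is harmless, as positive $\Delta$ with $\Delta t\notin2\pi\Z$ exist for any $t\neq0$. Second, $\sum_p p|\zeta_p|^2=\half\|u\|_{L^2(\T)}^2$, not $\|u\|_{L^2(\T)}^2$; this is immaterial since your computation only ever uses the Birkhoff-side expression.
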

\begin{rk}
Note that if $s\geq \half$, the maps $t\in\R\mapsto u(t)$ constructed in this way are solutions to equation \eqref{eq:bo4} in the distribution sense.
\end{rk}
We also investigate the question of the sequential weak continuity for the flow map.
\begin{thm}
For all $t\in\R$, the extension of flow map for equation \eqref{eq:bo4} $\mathcal{S}^t:u_0\mapsto u(t)$ is weakly sequentially continuous in $H^{s}_{r,0}(\T)$ for $s>0$, but is not weakly sequentially continuous in $L^2_{r,0}(\T)$.
\end{thm}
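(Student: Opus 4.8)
The plan is to work entirely in the Birkhoff coordinates $\Phi:L^2_{r,0}(\T)\to h^{\half}_+$, in which the flow is the explicit phase rotation $\zeta_n(\mathcal{S}^t u_0)=\zeta_n(u_0)\e^{i\omega_n^{(4)}(\zeta(u_0))t}$. The whole argument rests on one analytic input, which I would isolate as a preliminary lemma coming from the Gérard--Kappeler theory: the inverse map $\Phi^{-1}$ is weakly sequentially continuous on bounded sets, i.e. if $\zeta^{(j)}\to\zeta$ coordinatewise with $\sup_j\sum_n n|\zeta_n^{(j)}|^2<\infty$, then $\Phi^{-1}(\zeta^{(j)})\rightharpoonup\Phi^{-1}(\zeta)$ in $L^2_{r,0}(\T)$. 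Granting this, the dichotomy between $s>0$ and $s=0$ reduces to a single point: for fixed $n$ the frequency $\omega_n^{(4)}(\zeta)$ is continuous for the strong topology of $h^{\half}_+$ but not for mere coordinatewise convergence, the only offending term being $n\sum_{p}p|\zeta_p|^2$, a fixed multiple of $n\|u\|_{L^2}^2$, which is weakly lower semicontinuous but not weakly continuous.

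For the positive statement ($s>0$), I would first note that weak convergence $u_0^{(j)}\rightharpoonup u_0$ in $H^s$ forces $u_0^{(j)}\to u_0$ strongly in $L^2$, by the compact embedding $H^s(\T)\hookrightarrow L^2(\T)$. Since $\Phi$ is a homeomorphism onto $h^{\half}_+$, this gives $\zeta(u_0^{(j)})\to\zeta(u_0)$ strongly in $h^{\half}_+$; a direct estimate (bounding $\min(p,n)^2\leq np$ and $\min(p,q,n)\leq n$, so that each sum is dominated by $\|u\|_{L^2}^2$ and its square) shows that $\zeta\mapsto\omega_n^{(4)}(\zeta)$ is continuous on $h^{\half}_+$, hence $\omega_n^{(4)}(\zeta(u_0^{(j)}))\to\omega_n^{(4)}(\zeta(u_0))$ for every fixed $n$. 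Therefore $\zeta_n(\mathcal{S}^t u_0^{(j)})\to\zeta_n(\mathcal{S}^t u_0)$ for each $n$. Because the actions $|\zeta_n|^2$ are conserved by the flow and $\|u\|_{H^s}^2$ is equivalent to $\sum_n n^{2s+1}|\zeta_n|^2$, the sequence $\mathcal{S}^t u_0^{(j)}$ stays bounded in $H^s$; applying the weak-continuity lemma in $L^2$ and then upgrading weak $L^2$ convergence to weak $H^s$ convergence (via boundedness in $H^s$ together with coordinatewise convergence of the Fourier modes) yields $\mathcal{S}^t u_0^{(j)}\rightharpoonup\mathcal{S}^t u_0$ in $H^s$.

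For the negative statement I would build the counterexample directly in coordinates. Fix a one-gap datum $\zeta^0=(c,0,0,\dots)$ with $c>0$, set $u_0=\Phi^{-1}(\zeta^0)\neq0$, and for $\delta>0$ and $N_j\to\infty$ let $\zeta^{(j)}=\zeta^0+b_j e_{N_j}$ with $N_j|b_j|^2=\delta$, so that $b_j\to0$. Then $\zeta^{(j)}\to\zeta^0$ coordinatewise while $\|u_0^{(j)}\|_{L^2}^2=c^2+\delta$ stays bounded, so the lemma gives $u_0^{(j)}:=\Phi^{-1}(\zeta^{(j)})\rightharpoonup u_0$ in $L^2$; the point is that a mass $\delta$ escapes to infinite frequency. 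Evaluating the frequency at $n=1$, every term converges except $\sum_p p|\zeta_p^{(j)}|^2=c^2+\delta$, so $\omega_1^{(4)}(\zeta^{(j)})\to\omega_1^{(4)}(\zeta^0)+\delta$, while all coordinates $n\geq2$ remain null in the limit. Consequently $\zeta_1(\mathcal{S}^t u_0^{(j)})\to c\,\e^{i(\omega_1^{(4)}(\zeta^0)+\delta)t}$ whereas $\zeta_1(\mathcal{S}^t u_0)=c\,\e^{i\omega_1^{(4)}(\zeta^0)t}$; by the lemma again $\mathcal{S}^t u_0^{(j)}\rightharpoonup\Phi^{-1}(c\,\e^{i(\omega_1^{(4)}(\zeta^0)+\delta)t},0,\dots)$, which differs from $\mathcal{S}^t u_0$ by injectivity of $\Phi^{-1}$ as soon as $\e^{i\delta t}\neq1$. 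Choosing $\delta t\notin2\pi\Z$ then contradicts weak sequential continuity in $L^2_{r,0}(\T)$.

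The main obstacle is the weak-continuity lemma for $\Phi^{-1}$: everything else is either the explicit flow formula or elementary estimates on the frequency sums. I expect to cite it from the Birkhoff-coordinate construction, or to prove it from the spectral description of $\Phi$, the subtlety being that weak $L^2$ convergence of potentials must be shown to imply convergence of the spectral data $\zeta_n$ even though the $L^2$ mass is only lower semicontinuous --- which is precisely the mechanism that produces the frequency shift in the counterexample.
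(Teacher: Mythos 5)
Your proposal is correct in substance and runs on the same engine as the paper: both parts are carried out in Birkhoff coordinates, the positive part rests on the compact embedding $H^s_{r,0}(\T)\hookrightarrow L^2_{r,0}(\T)$, and the negative part rests on exactly the mechanism the paper uses — a fixed amount of weighted action placed at an index $N_j\to\infty$ vanishes in the weak limit yet shifts every frequency $\omega_n^{(4)}$ by a fixed nonzero amount through the term $n\sum_p p|\zeta_p|^2=\tfrac n2\|u\|_{L^2(\T)}^2$. The differences are in execution. For $s>0$, the paper argues more directly: strong $L^2$ convergence of the data plus the already established $L^2$ continuity of the flow (Proposition \ref{prop:GWP}) gives strong $L^2$ convergence of $u_k(t)$, which is then upgraded to weak $H^s$ convergence; your detour through coordinatewise convergence of the flowed coordinates and a weak-continuity lemma for $\Phi^{-1}$ is correct but not needed there. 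For the negative part, the paper works at an \emph{arbitrary} $u_0\neq0$ and never identifies the weak limit of $u_k(t)$: it only uses the \emph{forward} weak sequential continuity of $\Phi$ (cited from Gérard--Kappeler--Topalov, Theorem 6) to get componentwise convergence of $\zeta_p(u_k(t))$, which would force $\tfrac{2\pi n_k}{t}\to-\alpha$ for integers $n_k$, impossible once $\alpha\notin\tfrac{2\pi}{t}\Z$. You instead pin down the weak limit of $\mathcal{S}^tu_0^{(j)}$ as a different one-gap potential, which is cleaner to state but is the step that forces you to invoke weak sequential continuity of $\Phi^{-1}$ for the solutions, not just for the initial data.

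Concerning the lemma you flag as the main obstacle: it is genuinely needed (the paper itself uses it silently when it asserts that weak convergence of the coordinates of $u_0^k$ gives weak convergence of $u_0^k$ in $L^2_{r,0}(\T)$), but it is not an obstacle, because it follows from ingredients the paper already cites by a soft subsequence argument. Indeed, suppose $\zeta^{(j)}\rightharpoonup\zeta$ in $h^{\half}_+$ and set $u^{(j)}=\Phi^{-1}(\zeta^{(j)})$. By the trace formula $\|u^{(j)}\|_{L^2(\T)}^2=2\sum_n n|\zeta^{(j)}_n|^2$, the sequence $(u^{(j)})_j$ is bounded in $L^2_{r,0}(\T)$; if $v$ is the weak limit of any subsequence, the forward weak sequential continuity of $\Phi$ gives $\Phi(v)=\zeta$, hence $v=\Phi^{-1}(\zeta)$ by injectivity. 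Every weakly convergent subsequence having the same limit, the whole bounded sequence satisfies $u^{(j)}\rightharpoonup\Phi^{-1}(\zeta)$. So your only missing step closes with two lines and the same citation the paper uses, rather than requiring any new spectral analysis of the Birkhoff map.
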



In \cite{GerardKappelerTopalov2019}, Gérard, Kappeler and Topalov proved that the flow map for the Benjamin-Ono equation is globally $\classeC^0$-well-posed in $H^{s}_{r,0}(\T)$ for $s>-\half$, whereas from \cite{PavaHakkaev2010} there is no continuous extension of the flow map to $H^s_{r,0}(\T)$ when $s<-\half$. We expect that the well-posedness threshold on the torus increases by $\half$ for each new equation in the hierarchy~: for the equation corresponding to the $k$-th Hamiltonian $\hamilton_k$, $k\geq 4$, the threshold should be $H^{\frac{k}{2}-2}_{r,0}(\T)$  (see Remarks \ref{rk:GWP} and~\ref{rk:WP}). Note that all the equations for the Benjamin-Ono hierarchy have critical Sobolev exponent $-\half$.

Let us mention former approaches to the Cauchy problem for higher order Benjamin-Ono equations.
Tanaka \cite{Tanaka2019} considered more general third order type Benjamin-Ono equations on the torus 
\[
\partial_t u=\partial_x(-\partial_{xx}u-c_1u\hilbert\partial_x u-c_2\hilbert(u\partial_x u)+u^3),
\]
and proved local well-posedness in $H^s(\T)$ for $s>\frac{5}{2}$. He deduced global well-posedness in $H^s(\T)$, $s\geq 3$ for the integrable case $c_1=c_2=\frac{3}{2}$.

On the real line, Feng and Han \cite{FengHan1996} proved local well-posedness in $H^s(\R)$, $s\geq 4$ for the third equation of the Benjamin-Ono hierarchy \eqref{eq:bo4}. Considering more general third order type Benjamin-Ono equations under the form
\[
\partial_t u-b\hilbert\partial_{xx}u-a\partial_{xxx}u=cv\partial_xv-d\partial_x(v\hilbert \partial_x v+\hilbert(v\partial_xv)),
\]
Linares, Pilod and Ponce \cite{LinaresPilodPonce2011} established local well-posedness in $H^s(\R)$, $s\geq2$, then Molinet and Pilod \cite{MolinetPilod2012} proved global well-posedness in $H^s(\R)$, $s\geq 1$.

Concerning Benjamin-Ono equations of fourth order on the torus and on the real line, Tanaka \cite{Tanaka2019-2} proved local well-posedness in $H^s$, $s>\frac{7}{2}$ for a more general family of fourth order type Benjamin-Ono equations, and deduced global well-posedness in $H^s$, $s\geq 4$ in the integrable case.
\newline

Our second main result is the classification of the traveling waves for the third order Benjamin-Ono equation in $L^2_{r,0}(\T)$, i.e.\@ the solutions to \eqref{eq:bo4} under the form $u(t,x)=u_0(x+ct)$, $t\in\R$, $x\in\T$, $u_0\in L^2_{r,0}(\T)$.

\begin{mydef}
For $N\geq 1$, we say that $u\in L^2_{r,0}(\T)$ is a $N$ gap potential if the set $\{n\geq 1\mid \zeta_n(u)\neq 0\}$, where $\Phi(u)=(\zeta_n(u))_{n\geq1}$, is finite and of cardinality $N$.
\end{mydef}
\begin{thm}
A potential $u_0\in L^2_{r,0}(\T)$ defines a traveling wave for equation \eqref{eq:bo4} if and only if
\begin{itemize}
\item either $u_0$ is a one gap potential ;
\item either $u_0$ is a two gap potential, and the two nonzero indexes $p<q$ satisfy, with $\gamma_p=|\zeta_p|^2$ and $\gamma_q=|\zeta_q|^2$,
\[
0< \gamma_p< \half\left(p+\sqrt{p^2+4q\frac{p+q}{3}}\right)
\]
and
\[
\gamma_q=\frac{q\frac{p+q}{3}-\gamma_p^2+p\gamma_p}{2\gamma_p+q}.
\]

\end{itemize}
\end{thm}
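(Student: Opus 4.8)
The plan is to work entirely in the Birkhoff coordinates $\Phi(u)=(\zeta_n(u))_{n\geq 1}$, using two structural facts available from the setup: (i) in these coordinates the flow of \eqref{eq:bo4} acts by $\zeta_n(u(t))=\e^{i\omega^{(4)}_n(\gamma)t}\zeta_n(u_0)$, where $\gamma=(\gamma_p)_p=(|\zeta_p(u_0)|^2)_p$ and $\omega^{(4)}_n$ is given by \eqref{eq:omega_n^4} (the series there converge for every $u_0\in L^2_{r,0}(\T)$); and (ii) the Birkhoff map is equivariant under translations, $\zeta_n(u_0(\cdot+s))=\e^{ins}\zeta_n(u_0)$. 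Combining (i)--(ii) with the injectivity of $\Phi$, I would first show that $u_0$ defines a traveling wave of speed $c$ if and only if
\[
\omega^{(4)}_n(\gamma)=cn \qquad\text{for every } n\in S:=\{n\geq 1\mid \zeta_n(u_0)\neq 0\}.
\]
This reduces the entire classification to a question about the nonnegative sequence $\gamma$ and its support $S$.

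Next I would introduce $\Omega(x):=\omega^{(4)}_x(\gamma)$, obtained by replacing the integer $n$ in \eqref{eq:omega_n^4} by a real variable $x\geq 0$. A direct computation shows that $\Omega$ is continuous, that $\Omega(0)=0$, and that on each interval between consecutive points of $S\cup\{0\}=\{0=n_0<n_1<n_2<\cdots\}$ it coincides with a monic cubic
\[
\Omega(x)=x^3-3\sigma_k x^2+(A+3\sigma_k^2)\,x+b_k,\qquad x\in(n_k,n_{k+1}),
\]
where $A=\sum_p p\gamma_p$ and $\sigma_k=\sum_{j>k}\gamma_{n_j}$ is the tail mass beyond $n_k$. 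The traveling-wave condition says exactly that all points $(n_k,\Omega(n_k))$, including $(0,0)$, lie on the line $y=cx$; equivalently, the chord slopes of $\Omega$ over consecutive intervals are all equal to the common value $c$. Writing the chord slope over $(n_k,n_{k+1})$ gives the relations
\[
c=n_k^2+n_kn_{k+1}+n_{k+1}^2-3\sigma_k(n_k+n_{k+1})+3\sigma_k^2+A,
\]
which I call $(C_k)$, and subtracting consecutive ones to eliminate $A$ and $c$ yields a family of equations $(D_k):=(C_k)-(C_{k-1})$ free of $c$.

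With this in hand the cases $N\leq 2$ fall out. For $N=1$ there is a single equation $(C_0)$, which merely defines $c$, so every one gap potential is a traveling wave. For $N=2$ the single relation $(D_1)$, solved for $\gamma_q$, reads precisely $\gamma_q=\big(q\tfrac{p+q}{3}-\gamma_p^2+p\gamma_p\big)/(2\gamma_p+q)$; since the denominator is positive, $\gamma_q>0$ is equivalent to $\gamma_p^2-p\gamma_p-q\tfrac{p+q}{3}<0$, that is to the stated bound $0<\gamma_p<\tfrac12\big(p+\sqrt{p^2+4q\tfrac{p+q}{3}}\big)$. Conversely, this choice of $c=\omega^{(4)}_p(\gamma)/p$ produces a genuine traveling wave, the sufficiency in both cases being just the converse of the reduction above.

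The heart of the argument, and the step I expect to be the main obstacle, is ruling out three or more gaps (finite or infinite $S$). Here I would use only the first two difference equations $(D_1)$ and $(D_2)$, built from the three smallest support points $p<q<n_3$ and the tail masses $\sigma_1,\sigma_2=\sigma_1-\gamma_q$, together with positivity $\gamma_p,\gamma_q>0$. Viewing $(D_1)$ as a quadratic in $\gamma_p$ and analyzing the signs of its sum and product of roots shows that $\gamma_p>0$ forces $\sigma_1<\tfrac{p+q}{3}$; a sign analysis of $(D_2)$, using that then $3\sigma_2<p+q+n_3$, shows that $\gamma_q>0$ forces $\gamma_q+2\sigma_2>p+q$, i.e. $2\sigma_1-\gamma_q>p+q$, whence $\sigma_1>\tfrac{p+q}{2}$. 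These two inequalities are incompatible, giving the contradiction. The subtlety to get right is that the argument must work uniformly, regardless of how many (possibly infinitely many) further nonzero actions there are: since $(D_1)$, $(D_2)$ and the inequalities only ever refer to $\sigma_1,\sigma_2$, which are finite for any $L^2_{r,0}(\T)$ potential, the contradiction is insensitive to the rest of the spectrum.
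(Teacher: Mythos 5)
Your proposal is correct, and its algebraic core coincides with the paper's: the same reduction to the condition $\omega_n^{(4)}(\gamma)=cn$ on the support, your relation $(D_1)$ is exactly the paper's two-gap equation \eqref{eq:twogap1} (and \eqref{eq:pq} when further gaps are present), your $(D_2)$ is the paper's \eqref{eq:pr-pq}, and your incompatible pair $\sigma_1<\tfrac{p+q}{3}$, $\sigma_1>\tfrac{p+q}{2}$ plays the role of the paper's pair \eqref{ineq:gammaq-gammar} and $\Gamma_r>\tfrac{p+q}{3}$ (note that your $\sigma_1$, $\sigma_2$ are precisely the paper's $\gamma_q+\Gamma_r$ and $\Gamma_r$). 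The genuine difference is structural, and it buys a simplification: the paper must first prove that traveling waves are finite gap potentials, using the asymptotics $\omega_n^{(4)}(u_0)/n\sim n^2$ combined with condition \eqref{condition}, and only afterwards runs the three-smallest-indices contradiction (first for exactly three gaps, then for $N\geq 3$ gaps by substituting the tail mass $\Gamma_r$). Because your chord-slope equations are phrased from the outset in terms of the tail masses $\sigma_1,\sigma_2$, which are finite for every $u_0\in L^2_{r,0}(\T)$ since $\sum_p p\gamma_p=\tfrac12\|u_0\|_{L^2(\T)}^2<+\infty$, your contradiction applies verbatim to supports of any cardinality, so the asymptotic finiteness step can be skipped entirely; the piecewise-cubic function $\Omega$ and its chord slopes are a clean packaging of the same identities. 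Conversely, the paper's route makes the finite gap structure explicit at the start, which is also convenient for describing the waves themselves. In a full write-up you should, as the paper does, justify the starting equivalence by combining the definition of the extended flow on $L^2_{r,0}(\T)$ in Birkhoff coordinates (Proposition \ref{prop:GWP}, rotation with frequencies \eqref{eq:omega_n^4}) with the translation identity $\zeta_n(u(\cdot+s))=\e^{ins}\zeta_n(u)$ from \cite{GerardKappeler2019}, Proposition B.1.
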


Note that from \cite{GerardKappeler2019}, the one gap potentials are the only traveling wave solutions to the Benjamin-Ono equation ; they have been characterized by Amick and Toland \cite{AmickToland1991}.
\newline

Our last main result answers the question of orbital stability for these two types of traveling waves.

\begin{mydef}
Let $u_0\in L^2_{r,0}(\T)$ be a one gap traveling wave. We say the $u_0$ is orbitally stable if for all $\varepsilon>0$, there exists $\delta>0$ such that if $v$ is a solution to \eqref{eq:bo4} with initial condition $v_0\in L^2_{r,0}(\T)$ such that $\|v_0-u_0\|_{L^2(\T)}\leq \delta$, then
\[
\sup_{t\in\R}\inf_{\theta\in\T}\|v(t)-u_0(\cdot+\theta)\|_{L^2(\T)}\leq \varepsilon.
\]
\end{mydef}

\begin{thm}
The one gap traveling waves are orbitally stable, whereas the two gap traveling waves are orbitally unstable.
\end{thm}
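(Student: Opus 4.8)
The plan is to work entirely in the Birkhoff coordinates $\Phi:L^2_{r,0}(\T)\to h^{\half}_+$ of Gérard and Kappeler, using three facts available from the earlier part of the paper: $\Phi$ and $\Phi^{-1}$ are Lipschitz on bounded sets; along the flow of \eqref{eq:bo4} each action $\gamma_n=|\zeta_n|^2$ is conserved while each angle rotates at the constant speed $\omega_n^{(4)}(\zeta)$ of \eqref{eq:omega_n^4}, which depends only on the actions; and a space translation by $\theta$ acts as $\zeta_n\mapsto\e^{in\theta}\zeta_n$. Consequently the orbit $\{u_0(\cdot+\theta)\mid\theta\in\T\}$ of a potential with nonzero indices $n_1<\dots<n_N$ is the image under $\Phi^{-1}$ of the curve $\theta\mapsto(\e^{in_j\theta}\zeta_{n_j}(u_0))_j$ sitting inside the $N$-torus of fixed actions $\{|\zeta_{n_j}|^2=\gamma_{n_j}\}$, and I will measure distances by transporting them through $\Phi$.

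\textbf{One gap.} If $u_0$ has a single nonzero index $p$, its orbit is the \emph{entire} action level set $\{\zeta\mid\zeta_n=0\ (n\neq p),\ |\zeta_p|^2=\gamma_p\}$, since $\theta\mapsto\e^{ip\theta}$ sweeps the whole circle. Given $\varepsilon>0$, take $v_0$ with $\|v_0-u_0\|_{L^2}\leq\delta$. Lipschitz continuity of $\Phi$ gives $|\gamma_p(v_0)-\gamma_p|\lesssim\delta$ and $\sum_{n\neq p}n\gamma_n(v_0)\lesssim\delta^2$, and both quantities are \emph{conserved}. At any time $t$, I would choose $\theta$ so that $\e^{ip\theta}\zeta_p(u_0)$ has the same argument as $\zeta_p(v(t))$ (possible since $p\geq1$); using $\zeta_n(u_0)=0$ for $n\neq p$ and Lipschitz continuity of $\Phi^{-1}$,
\[
\inf_{\theta\in\T}\|v(t)-u_0(\cdot+\theta)\|_{L^2}^2
\lesssim p\bigl(\sqrt{\gamma_p(v(t))}-\sqrt{\gamma_p}\bigr)^2+\sum_{n\neq p}n\,\gamma_n(v(t))\lesssim\delta,
\]
uniformly in $t$, so shrinking $\delta$ yields orbital stability. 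The point is that for one gap the orbit coincides with a level set of the conserved actions, so stability is essentially automatic.

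\textbf{Two gaps.} Let $u_0$ have indices $p<q$ and actions $\gamma_p,\gamma_q$. The flow rotates the two angles at speeds $\omega_p^{(4)},\omega_q^{(4)}$ whereas translation rotates them at speeds $pc,qc$; hence the orbit is a traveling wave exactly when $\tfrac{\omega_p^{(4)}}{p}=\tfrac{\omega_q^{(4)}}{q}$. Setting $F(\gamma_p,\gamma_q):=\tfrac{\omega_p^{(4)}}{p}-\tfrac{\omega_q^{(4)}}{q}$ and inserting \eqref{eq:omega_n^4} for a two gap potential, I would check that $F=0$ recovers precisely the curve of the classification theorem and, crucially, that
\[
\partial_{\gamma_q}F=3(q-p)\,\frac{q+2\gamma_p}{q}>0 .
\]
Therefore, for arbitrarily small $\epsilon\neq0$, the nearby potential $v_0$ obtained by keeping all phases of $u_0$ and replacing the action $\gamma_q$ by $\gamma_q+\epsilon$ satisfies $\|v_0-u_0\|_{L^2}\lesssim|\epsilon|$ (Lipschitz $\Phi^{-1}$) yet $F(\gamma_p,\gamma_q+\epsilon)\neq0$.

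\textbf{Drift and the main obstacle.} For this $v_0$ the relative phase $M(t):=q\arg\zeta_p(v(t))-p\arg\zeta_q(v(t))$, which is constant along the orbit, satisfies $\dot M=q\omega_p^{(4)}-p\omega_q^{(4)}=pq\,F(\gamma_p,\gamma_q+\epsilon)$, so $M(t)$ drifts linearly and is unbounded. For every $\theta$ the two phase errors $\xi=\arg\zeta_p(v)-\arg(\e^{ip\theta}\zeta_p(u_0))$ and $\eta=\arg\zeta_q(v)-\arg(\e^{iq\theta}\zeta_q(u_0))$ are constrained by $q\xi-p\eta\equiv M(t)$, so when $M(t)$ is not a multiple of $2\pi\gcd(p,q)$ one cannot make both $\xi,\eta$ vanish. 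At a time $t^*$ where $M(t^*)$ sits midway between admissible values, $p\gamma_p\sin^2(\xi/2)+q\gamma_q\sin^2(\eta/2)$ stays above a constant $\varepsilon_0(p,q,\gamma_p,\gamma_q)>0$ for all $\theta$; transporting through the Lipschitz map $\Phi$ gives $\inf_\theta\|v(t^*)-u_0(\cdot+\theta)\|_{L^2}\geq\varepsilon_1>0$ with $\varepsilon_1$ independent of $\epsilon$, which is incompatible with stability. The hard part will be exactly this uniform lower bound: although the drift rate $pq\,F$ tends to $0$ as $\epsilon\to0$, the two-sided supremum over $t\in\R$ must still be shown to force a full phase misalignment, so that the geometric separation $\varepsilon_1$ can be taken independent of the size of the perturbation.
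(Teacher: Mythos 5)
Your proposal follows essentially the same route as the paper: pass to Birkhoff coordinates, use that the actions are conserved, that the angles rotate at speeds $\omega_n^{(4)}$ depending only on the actions, and that translation acts by $\zeta_n\mapsto e^{in\theta}\zeta_n$; deduce one gap stability from the fact that the translation orbit of a one gap potential is the whole action circle; and deduce two gap instability by perturbing the action $\gamma_q$, noting that the modal speeds desynchronize at rate $\partial_{\gamma_q}F=3(q-p)(q+2\gamma_p)/q\neq0$ (this matches the paper's identity $\omega_q^{(4)}(u_0^k)/q-\omega_p^{(4)}(u_0^k)/p=-3(q-p+2(1-\tfrac{p}{q})\gamma_p)\varepsilon_k$), then waiting until the relative phase has drifted; your ``time $t^*$ where $M(t^*)$ sits midway'' is exactly the paper's choice of $t_k$ with $-3pqt_k(q-p+2(1-\tfrac{p}{q})\gamma_p)\varepsilon_k=\pi$.

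Two points need repair. First, the paper only provides that $\Phi$ is a homeomorphism of $L^2_{r,0}(\T)$ onto $h^{1/2}_+$; Lipschitz continuity of $\Phi$ or $\Phi^{-1}$ on bounded sets is \emph{not} among the available facts, so your quantitative estimates ``$\lesssim\delta$'' and ``$\lesssim|\epsilon|$'' are unjustified as written. The cure is to argue sequentially, as the paper does: for stability, take arbitrary $u_0^k\to u_0$ in $L^2_{r,0}(\T)$ and times $t_k$, extract a subsequence with $e^{i\omega_p^{(4)}(u_0^k)t_k}\to e^{i\theta}$, and combine the explicit $h^{1/2}_+$ computation with mere continuity of $\Phi$ and $\Phi^{-1}$; for instability, if stability held there would exist phases $\theta_k$ with $\|u_k(t_k,\cdot-\theta_k)-u_0\|_{L^2(\T)}\to0$, and one applies $\Phi$ at the \emph{fixed} limit $u_0$ (translating $v$ rather than $u_0$, so no uniformity over the orbit is ever needed) to convert this into componentwise phase convergence. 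Second, the step you flag as ``the hard part'' is not a gap at all: orbital stability quantifies over all $t\in\R$, so the smallness of the drift rate $pqF$ as $\epsilon\to0$ only makes the misalignment time $t^*$ large, never prevents its existence. Once $M(t^*)-M_0\equiv\pi\pmod{2\pi}$, the constraint $q\xi-p\eta\equiv\pi$ forces $\max(|\xi|,|\eta|)\geq\pi/(2(p+q))$, hence a lower bound on the $h^{1/2}_+$ distance to the orbit depending only on $p,q,\gamma_p,\gamma_q$ (up to an $O(\epsilon)$ correction from $|\zeta_q(v)|=\sqrt{\gamma_q+\epsilon}$). In the paper's sequential formulation this is automatic: the quantity $pqt_k\bigl(\omega_q^{(4)}(u_0^k)/q-\omega_p^{(4)}(u_0^k)/p\bigr)+2\pi(pn_{q,k}-qn_{p,k})$ lies in $\pi+2\pi\Z$ and therefore cannot converge to $0$. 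With these two adjustments your argument coincides with the paper's proof.
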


For the Benjamin-Ono equation, Pava and Natali \cite{PavaNatali2008} proved the orbital stability of the traveling wave solutions in $H^{\half}_{r,0}(\T)$. In \cite{GerardKappelerTopalov2019}, Gérard, Kappeler and Topalov improved the orbital stability of these solutions to $H^{-s}_{r,0}(\T)$, $0\leq s<\half$.

\paragraph{Plan of the paper} The paper is organized as follows. We first prove the well-posedness threshold for the third order Benjamin-Ono equation \eqref{eq:bo4} in Section \ref{part:GWP}. Finally, in Section \ref{part:traveling_waves}, we classify the traveling wave solutions and study their orbital stability properties.

In Appendix \ref{part:hierarchy}, we describe how to compute the Hamiltonians $\hamilton_k$ and frequencies $\omega^{(k)}_n=\frac{\partial \hamilton_k\circ\Phi^{-1}}{\partial |\zeta_n|^2}$ in terms of the action variables $|\zeta_p|^2$. In Appendix \ref{part:appendix}, we retrieve the Hamiltonian and frequencies of the third order Benjamin-Ono equation (see formulas~\eqref{eq:hamilton4} and \eqref{eq:omega_n^4}) by starting from the definition \eqref{eq:Hk_Lax} of the higher order Hamiltonians. In Appendix~\ref{part:appendix_structure}, we provide an alternative proof of a result from \cite{TzvetkovVisciglia2014} about the structure of the higher order Hamiltonians by using formula~\eqref{eq:Hk_Lax}, which may be of independent interest.

\paragraph{Acknowledgements} The author would like to thank her PhD advisor Professor P.\@ Gérard for introducing her to this problem, and for his continuous support and advices.

\section{Well-posedness threshold for the fourth Hamiltonian}\label{part:GWP}

Let $N\in\N$ and let $\mathcal{U}_N$ be the set
\[
\mathcal{U}_N
	=\{u\in L^2_{r,0}(\T)\mid \zeta_N(u)\neq 0 ,\quad \zeta_j(u)=0 \quad\forall j>N\}.
\]
We know from \cite{GerardKappeler2019}, Theorem 3, that the restriction of the Birkhoff map $\Phi$ to $\mathcal{U}_N$ is a real analytic diffeomorphism onto some Euclidean space. In Birkhoff coordinates, the evolution along the flow of equation \eqref{eq:bo4} for an initial data $u_0\in\mathcal{U}_N$ writes
\[\begin{cases}
\partial_t\zeta_n=i\omega_n^{(4)}(u_0)\zeta_n
\\
\zeta_n(0)=\zeta_n(u_0)
\end{cases},
	\quad n\geq 1,
\]
where for all $n\geq 1$, the frequencies $\omega_n^{(4)}(u_0)$ are given by \eqref{eq:omega_n^4}
\begin{align*}
\omega_n^{(4)}(u_0)
	=n^3+n\sum_{p\geq 1}p|\zeta_p(u_0)|^2-3\sum_{p\geq1}\min(p,n)^2|\zeta_p(u_0)|^2+3\sum_{p,q\geq 1}\min(p,q,n)|\zeta_p(u_0)|^2|\zeta_q(u_0)|^2.
\end{align*}
This implies that
\[
\zeta_n(u(t))=\zeta_n(u_0)\e^{i\omega_n^{(4)}(u_0)t},
	\quad t\in\R, \quad n\geq1.
\]
Therefore, for any finite gap inifial data $u_0$, belonging to some of the sets $\mathcal{U}_N$, the flow map $\mathcal{S}^t:u_0\in\mathcal{U}_N\mapsto u(t)\in\mathcal{U}_N$ is well-defined.

In part~\ref{subsection:GWP}, we prove that for all $t\in\R$, this flow map extends by continuity to $H^s_{r,0}(\T)$ for $s\geq 0$. We also show that the extension is sequentially weakly continuous in $H^s_{r,0}(\T)$ for $s> 0$, but not in $L^2_{r,0}(\T)$. In part \ref{subsection:ill_posed}, we prove that the flow map does not extend by continuity to $H^{-s}_{r,0}(\T)$ for $s>0$. This gives a threshold for the global $\classeC^0$-well-posedness of the third order Benjamin-Ono equation in the sense of Definitions 1 and 2 from \cite{GerardKappelerTopalov2019}.

\subsection{Well-posedness in \texorpdfstring{$H^s_{r,0}(\T)$, $s\geq 0$}{H^s, s>=0}}\label{subsection:GWP}

\begin{prop}\label{prop:GWP}
Let $s\geq 0$. For any $u_0\in H^s_{r,0}(\T)$, there exists a continuous map $t\in\R\mapsto \mathcal{S}^t(u_0)=u(t)\in H^s_{r,0}(\T)$ with $u(0)=u_0$ such that the following holds.

For any finite gap sequence $(u_0^k)_k$ converging to $u_0$ in $H^s_{r,0}(\T)$, for any $t\in\R$, $u_k(t)=\mathcal{S}^t(u_0^k)$ converges to $u(t)$ in $H^s_{r,0}(\T)$ as $k$ goes to infinity.

Moreover, the extension of the flow map $\mathcal{S}:u_0\in H^s_{r,0}(\T)\mapsto (t\mapsto u(t))\in \classeC(\R,H^s_{r,0}(\T))$ is continuous.
\end{prop}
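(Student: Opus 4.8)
The plan is to transport the whole problem to Birkhoff coordinates. The key input I would invoke is that, for every $s\geq 0$, the Birkhoff map $\Phi$ is a homeomorphism (in fact a real-analytic diffeomorphism) from $H^s_{r,0}(\T)$ onto the weighted sequence space
\[
h^{s+\half}_+=\Big\{(\zeta_n)_{n\geq1}\ \Big|\ \|\zeta\|_s^2:=\sum_{n\geq1}n^{2s+1}|\zeta_n|^2<+\infty\Big\},
\]
so it suffices to prove the three assertions for the conjugated evolution $\zeta\mapsto\zeta(t)$ on $h^{s+\half}_+$, then push forward through $\Phi^{-1}$. First I would check that the extension is even well-defined: for $u_0\in L^2_{r,0}(\T)$, i.e.\@ $\zeta=\Phi(u_0)\in h^{\half}_+$, each frequency $\omega_n^{(4)}(\zeta)$ of \eqref{eq:omega_n^4} is a finite real number, since the elementary bounds $\min(p,n)^2\leq np$ and $\min(p,q,n)\leq\sqrt{pq}$ make all four series converge, controlled by $\sum_p p|\zeta_p|^2=\|\zeta\|_0^2$; the same bounds show each $\omega_n^{(4)}$ is a continuous function on $h^{\half}_+$. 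I then set $\zeta_n(t)=\zeta_n\e^{i\omega_n^{(4)}(\zeta)t}$ and $u(t)=\Phi^{-1}(\zeta(t))$. Because $|\zeta_n(t)|=|\zeta_n|$ for all $t$, the actions are conserved and $\|\zeta(t)\|_s=\|\zeta\|_s$, so $\zeta(t)$ stays in $h^{s+\half}_+$ and $u(t)\in H^s_{r,0}(\T)$ is well-defined; for finite gap $u_0$ this coincides with the explicit solution.

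For continuity in $t$ and the convergence of finite gap approximations, the tool is dominated convergence for series. Fixing $\zeta$, one has $|\zeta_n(t)-\zeta_n(t_0)|^2=|\zeta_n|^2|\e^{i\omega_n^{(4)}(\zeta)t}-\e^{i\omega_n^{(4)}(\zeta)t_0}|^2\to0$ for each $n$ as $t\to t_0$, dominated by $4n^{2s+1}|\zeta_n|^2$ after weighting; summing gives $\|\zeta(t)-\zeta(t_0)\|_s\to0$, hence $t\mapsto u(t)$ is continuous. For a finite gap sequence $\zeta^k=\Phi(u_0^k)\to\zeta$ in $h^{s+\half}_+$, I would use the splitting
\[
\zeta_n^k(t)-\zeta_n(t)=(\zeta_n^k-\zeta_n)\e^{i\omega_n^{(4)}(\zeta^k)t}+\zeta_n\big(\e^{i\omega_n^{(4)}(\zeta^k)t}-\e^{i\omega_n^{(4)}(\zeta)t}\big).
\]
The first term has $h^{s+\half}_+$-norm exactly $\|\zeta^k-\zeta\|_s\to0$; for the second, the pointwise continuity of each $\omega_n^{(4)}$ gives $|\e^{i\omega_n^{(4)}(\zeta^k)t}-\e^{i\omega_n^{(4)}(\zeta)t}|^2\to0$ for each $n$, bounded by $4$, so dominated convergence with weight $4n^{2s+1}|\zeta_n|^2$ sends it to $0$. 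Thus $u_k(t)\to u(t)$ in $H^s_{r,0}(\T)$.

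The hard part will be the global continuity of the extension $u_0\mapsto(t\mapsto u(t))$ into $\classeC(\R,H^s_{r,0}(\T))$ with the topology of uniform convergence on compact time intervals, because I now need the second term above to go to $0$ \emph{uniformly} in $t$ on $\{|t|\leq T\}$. The obstruction is that the frequency differences $\omega_n^{(4)}(\zeta^{(j)})-\omega_n^{(4)}(\zeta)$ grow linearly in $n$ — through the terms $n\sum_p p|\zeta_p|^2$ and $\sum_p\min(p,n)^2|\zeta_p|^2$ — so the Lipschitz estimate $|\e^{i\alpha t}-\e^{i\beta t}|\leq T|\alpha-\beta|$ cannot be applied uniformly over all $n$. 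I would resolve this by a low/high frequency split at a threshold $M$ depending only on the fixed limit $\zeta$: for $n>M$ bound the difference of exponentials by $2$ and control $\sum_{n>M}n^{2s+1}|\zeta_n|^2$ by the tail of the convergent series $\|\zeta\|_s^2$, made $<\varepsilon$ by choosing $M$ large (crucially, the coefficient in the second term is the limit $\zeta_n$, whose tail decay is fixed); for the finitely many $n\leq M$ apply the Lipschitz bound together with the pointwise continuity of each $\omega_n^{(4)}$ to send the finite sum to $0$ as $j\to\infty$. Combining with the first term, which is already uniform in $t$, yields $\sup_{|t|\leq T}\|\zeta^{(j)}(t)-\zeta(t)\|_s\to0$, which is the desired continuity of the extended flow map.
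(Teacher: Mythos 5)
Your proof is correct and follows essentially the same route as the paper: transport the problem to Birkhoff coordinates via the homeomorphism $\Phi:H^s_{r,0}(\T)\to h^{\half+s}_+$, use the same splitting $\zeta_n^k(t)-\zeta_n(t)=(\zeta_n^k-\zeta_n)\e^{i\omega_n^{(4)}(\zeta^k)t}+\zeta_n\bigl(\e^{i\omega_n^{(4)}(\zeta^k)t}-\e^{i\omega_n^{(4)}(\zeta)t}\bigr)$, and obtain uniformity on bounded time intervals by the same truncation argument (tail controlled by the decay of the fixed limit $\zeta$, finitely many low modes by pointwise convergence of the frequencies). The only differences are presentational: you verify explicitly that the series defining $\omega_n^{(4)}$ converge on $h^{\half}_+$ and spell out the Lipschitz-in-$t$ bound, both of which the paper leaves implicit.
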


Recall that 
from \cite{GerardKappelerTopalov2019}, as mentioned in the proofs of Proposition 2 and Theorem 8, we have the following result. For $s\geq 0$, the Birkhoff map $\Phi$ defines a homeomorphism between $H^s_{r,0}(\T)$ and the space
\[
h^{\half+s}_+
	=\Big\{(\zeta_n)_{n\geq 1}\mid \sum_{n\geq 1}n^{1+2s}|\zeta_n|^2<+\infty\Big\}.
\]
The proof of Proposition \ref{prop:GWP} therefore relies on the following sequential convergence result obtained after applying the Birkhoff map.

\begin{lem}\label{lem:convergence_zeta}
Fix $s\geq0$. Let $\zeta^k=(\zeta^k_n)_{n\geq 1}$, $k\in\N$, and $\zeta$ be elements of $h^{\half+s}_+$ such that
\(
\|\zeta^k-\zeta\|_{h^{\half+s}_+}\longrightarroww{k\to+\infty}{}0.
\)
Then for all $t\in\R$,
\[
\|(\zeta^k_n\e^{i\omega_n^{(4)}(\zeta^k)t})_n-(\zeta_n\e^{i\omega_n^{(4)}(\zeta)t})_n\|_{h^{\half+s}_+}\longrightarroww{k\to+\infty}{}0,
\]
where the convergence is uniform on bounded time intervals.
\end{lem}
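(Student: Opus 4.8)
The plan is to split the difference term by term and separate the effect of the amplitudes from that of the phases. Fix $t\in\R$ and write $a_n^k=\zeta_n^k\e^{i\omega_n^{(4)}(\zeta^k)t}$ and $a_n=\zeta_n\e^{i\omega_n^{(4)}(\zeta)t}$, so that
\[
a_n^k-a_n=(\zeta_n^k-\zeta_n)\e^{i\omega_n^{(4)}(\zeta^k)t}+\zeta_n\left(\e^{i\omega_n^{(4)}(\zeta^k)t}-\e^{i\omega_n^{(4)}(\zeta)t}\right).
\]
Since the frequencies are real, the exponential factor in the first (amplitude) term has modulus one, so its $h^{\half+s}_+$ norm is exactly $\|\zeta^k-\zeta\|_{h^{\half+s}_+}$, which tends to $0$ by hypothesis. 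Everything therefore reduces to controlling the second (phase) term.

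The key observation is that, although the frequencies grow like $n^3$, the cubic term $n^3$ in \eqref{eq:omega_n^4} does not depend on $\zeta$ and cancels in the difference $\delta_n^k:=\omega_n^{(4)}(\zeta^k)-\omega_n^{(4)}(\zeta)$. Writing the actions $\gamma_p^k=|\zeta_p^k|^2$, $\gamma_p=|\zeta_p|^2$ and setting
\[
\epsilon_k:=\sum_{p\geq1}p\,|\gamma_p^k-\gamma_p|,
\]
I would first show that $\epsilon_k\to0$: by Cauchy--Schwarz, $\epsilon_k\leq\|\zeta^k-\zeta\|_{h^{\half}_+}\big(\sum_p p(|\zeta_p^k|+|\zeta_p|)^2\big)^{1/2}$, where the first factor is dominated by $\|\zeta^k-\zeta\|_{h^{\half+s}_+}\to0$ (as $n\leq n^{1+2s}$) while the second stays bounded. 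Then, using the elementary inequalities $\min(p,n)^2\leq pn$ and $\min(p,q,n)\leq n$ together with the boundedness of the $\ell^2$ norms of $\zeta^k$ and $\zeta$, each of the three $\zeta$-dependent sums in \eqref{eq:omega_n^4} is estimated by a constant times $n\,\epsilon_k$, giving a uniform bound
\[
|\delta_n^k|\leq C\,n\,\epsilon_k,
\]
with $C$ depending only on $\sup_k\|\zeta^k\|_{h^{\half}_+}$ and $\|\zeta\|_{h^{\half}_+}$.

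For the phase term I would then use $|\e^{i\delta_n^k t}-1|\leq\min(2,|t|\,|\delta_n^k|)\leq\min(2,C|t|\,n\,\epsilon_k)$, so that its squared $h^{\half+s}_+$ norm is bounded by
\[
\sum_{n\geq1}n^{1+2s}|\zeta_n|^2\min(2,C|t|\,n\,\epsilon_k)^2.
\]
Each summand is dominated by $4\,n^{1+2s}|\zeta_n|^2$, which is summable because $\zeta\in h^{\half+s}_+$, and for every fixed $n$ it tends to $0$ as $k\to\infty$ since $\epsilon_k\to0$. Dominated convergence then gives that the sum tends to $0$. On a bounded time interval $|t|\leq T$ the summand is increasing in $|t|$, so replacing $|t|$ by $T$ yields a $k$-dependent bound that is uniform in $t$ and still tends to $0$, which gives the uniform convergence on bounded time intervals.

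The main obstacle is precisely the cubic growth of the frequencies: a naive estimate of the phase difference is hopeless. The two ingredients that resolve it are the cancellation of the $\zeta$-independent cubic term, which reduces the frequency difference to linear growth in $n$ controlled by $\epsilon_k\to0$, and the truncation $|\e^{ix}-1|\leq\min(2,|x|)$, which lets dominated convergence absorb the remaining polynomial factor in $n$ against the fixed summable weight $n^{1+2s}|\zeta_n|^2$.
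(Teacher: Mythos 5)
Your proof is correct, and it uses the same initial decomposition as the paper (separating the amplitude difference $(\zeta_n^k-\zeta_n)\e^{i\omega_n^{(4)}(\zeta^k)t}$ from the phase difference $\zeta_n\bigl(\e^{i\omega_n^{(4)}(\zeta^k)t}-\e^{i\omega_n^{(4)}(\zeta)t}\bigr)$), but it handles the phase term by a genuinely different, more quantitative mechanism. The paper's argument is soft: it uses only that for each \emph{fixed} $n$ the frequency $\omega_n^{(4)}(\zeta^k)$ converges to $\omega_n^{(4)}(\zeta)$ (immediate from formula \eqref{eq:omega_n^4}), combined with a tail cutoff --- choose $N$ with $\bigl(\sum_{n\geq N}n^{1+2s}|\zeta_n|^2\bigr)^{\half}\leq\varepsilon$, bound the tail of the phase term by $2\varepsilon$ since the phase factors have modulus one, and let the remaining finite sum go to zero term by term. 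You instead prove a uniform-in-$n$ estimate $|\omega_n^{(4)}(\zeta^k)-\omega_n^{(4)}(\zeta)|\leq Cn\epsilon_k$ with $\epsilon_k=\sum_p p\,\bigl||\zeta_p^k|^2-|\zeta_p|^2\bigr|\to 0$ (via Cauchy--Schwarz), and then conclude by dominated convergence using the truncation $|\e^{ix}-1|\leq\min(2,|x|)$; dominated convergence is, under the hood, the same tail-splitting the paper performs by hand, so both proofs ultimately rest on the summability of $n^{1+2s}|\zeta_n|^2$ and the convergence of the frequencies. What your extra work buys is an explicit modulus of convergence in terms of $\epsilon_k$ and a completely mechanical proof of uniformity on bounded time intervals (replace $|t|$ by $T$ in a bound monotone in $|t|$), a point the paper settles with a one-line remark about the finite sum; the cost is the frequency-difference computation, which the paper's softer route avoids entirely.
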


\begin{proof}
Note that since $(\zeta^k)_k$ converges to $\zeta$ in $h^{\half+s}_+$, then for all $n\in\N$, formula \eqref{eq:omega_n^4} for $\omega_n^{(4)}(\zeta^k)$ implies that $\omega_n^{(4)}(\zeta^k)$ converges to $\omega_n^{(4)}(\zeta)$ as $k$ goes to infinity.

Let $\varepsilon>0$. Fix $K\in\N$ such that for all $k\geq K$,
\[
\|\zeta^k-\zeta\|_{h^{\half+s}_+}
	\leq \varepsilon.
\]
Using that $\zeta\in h^{\half+s}_+$, fix $N\in\N$ such that
\[
\Big(\sum_{n\geq N}n^{1+2s}|\zeta_n|^2\Big)^{\half}\leq\varepsilon.
\]

Now, if $k\geq K$,
\begin{align*}
\|(\zeta^k_n\e^{i\omega_n^{(4)}(\zeta^k)t})_n-(\zeta_n\e^{i\omega_n^{(4)}(\zeta)t})_n\|_{h^{\half+s}_+}
	&\leq \|(\zeta^k_n)_n-(\zeta_n)_n\|_{h^{\half+s}_+}
+\|(\zeta_n(\e^{i\omega_n^{(4)}(\zeta^k)t}-\e^{i\omega_n^{(4)}(\zeta)t}))_n\|_{h^{\half+s}_+}\\
	&\leq 3\varepsilon+\left(\sum_{n=0}^{N-1}n^{1+2s}|\zeta_n(\e^{i\omega_n^{(4)}(\zeta^k)t}-\e^{i\omega_n^{(4)}(\zeta)t})|^2\right)^{\half},
\end{align*}
which is less than $4\varepsilon$ for $k$ large enough by convergence term by term of the elements in the sum. Moreover, this convergence is uniform on bounded time intervals.
\end{proof}

\begin{proof}[Proof of Proposition \ref{prop:GWP}]
Let $s\geq 0$ and $u_0\in H^s_{r,0}(\T)$. Fix $t\in \R$, and a sequence of finite gap initial data $(u_0^k)_k$ converging to $u_0$ in $H^s_{r,0}(\T)$. 

We first establish that for all $t\in\R$, $(u_k(t))_k$ has a limit in $H^s_{r,0}(\T)$ as $k$ goes to $+\infty$. By assumption, $\Phi(u_0^k)$ converges to $\Phi(u_0)$ in $h^{\half+s}_+$.
Define the sequence $\zeta(t)$ by
\[
\zeta_n(t):=\zeta_n(u_0)\e^{i\omega_n^{(4)}(u_0)t}, \quad n\in\N.
\]
Lemma \ref{lem:convergence_zeta} immediately implies that the sequence $(\Phi(u_k(t)))_k$ converges to $\zeta(t)$ in $h^{\half+s}_+$. Since $\Phi^{-1}$ defines a continuous application from $h^{\half+s}_+$ to $H^s_{r,0}(\T)$, we deduce that $u_k(t)$ converges in $H^s_{r,0}(\T)$ to $u(t):=\Phi^{-1}(\zeta(t))$. Moreover, the convergence is uniform on bounded time intervals.

We now prove the continuity of the flow map $\mathcal{S}^t$. Let $u_0^k\in H^s_{r,0}(\T)$, $k\in\N$, be a sequence of initial data converging to some $u_0$ in $H^s_{r,0}(\T)$. Then $\Phi(u_0^k)$ converges to $\Phi(u_0)$ in $h^{\half+s}_+$, and the above Lemma \ref{lem:convergence_zeta} again implies that $\Phi(u_k(t))$ converges to $\Phi(u(t))$ in $h^{\half+s}_+$. In other terms, $u_k(t)$ converges to $u(t)$ in $H^s_{r,0}(\T)$, where again this convergence is uniform on bounded intervals.
\end{proof}

\begin{cor}
For all $s>0$ and all $t\in\R$, the extension of the flow map restricted to $H^s_{r,0}(\T)$~: $u_0\in H^s_{r,0}(\T)\mapsto u(t)\in H^s_{r,0}(\T)$ is sequentially weakly continuous.
\end{cor}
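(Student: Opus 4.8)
The plan is to transfer the question of weak sequential continuity of the flow map $\mathcal{S}^t$ to the Birkhoff side via the homeomorphism $\Phi:H^s_{r,0}(\T)\to h^{\half+s}_+$, exactly as was done for the norm-continuity statement in Proposition~\ref{prop:GWP}. First I would observe that weak convergence $u_0^k\rightharpoonup u_0$ in $H^s_{r,0}(\T)$ should correspond, under $\Phi$, to a suitable componentwise convergence $\zeta_n(u_0^k)\to\zeta_n(u_0)$ together with uniform boundedness of the $h^{\half+s}_+$ norms; the point is that on a bounded set of $h^{\half+s}_+$, weak convergence is equivalent to coordinatewise convergence of the $\zeta_n$. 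Since weakly convergent sequences are bounded, $\sup_k\|u_0^k\|_{H^s}<\infty$, and because $\Phi$ preserves the relevant norm equivalence, the sequence $\zeta^k=\Phi(u_0^k)$ is bounded in $h^{\half+s}_+$.

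Next I would establish the crucial componentwise convergence of the \emph{phases}. For each fixed $n$, formula~\eqref{eq:omega_n^4} expresses $\omega_n^{(4)}(\zeta^k)$ through the actions $|\zeta_p^k|^2$ via the convergent sums $\sum_p p|\zeta_p^k|^2$, $\sum_p\min(p,n)^2|\zeta_p^k|^2$, and $\sum_{p,q}\min(p,q,n)|\zeta_p^k|^2|\zeta_q^k|^2$. The obstacle here is that these are \emph{infinite} sums of the actions, so coordinatewise convergence $|\zeta_p^k|^2\to|\zeta_p|^2$ alone is not enough; I must upgrade to convergence of these weighted sums. For $s>0$ this is where positivity enters: I would argue that coordinatewise convergence plus convergence of the total action sum $\sum_p p|\zeta_p^k|^2\to\sum_p p|\zeta_p|^2$ (which follows from weak convergence combined with conservation or lower semicontinuity of the norm, \emph{provided} the norms converge) forces the weighted sums to converge. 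The clean way is to invoke that $\mathcal{S}^t$ preserves the full $h^{\half+s}_+$ norm structure (the actions are conserved, so $\|u_k(t)\|_{H^s}=\|u_0^k\|_{H^s}$), and to show $\omega_n^{(4)}(\zeta^k)\to\omega_n^{(4)}(\zeta)$ for each $n$ using that $s>0$ makes the weights $p,\min(p,n)^2,\min(p,q,n)$ summable against the measure $n^{1+2s}|\zeta_n|^2$ with a genuine spectral gap. I expect this frequency-convergence step to be the main difficulty, and I would isolate it as the heart of the argument, handling the double sum by dominated convergence against a uniform tail bound coming from the boundedness of $\|\zeta^k\|_{h^{\half+s}_+}$.

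Once $\omega_n^{(4)}(\zeta^k)\to\omega_n^{(4)}(\zeta)$ and $\zeta_n^k\to\zeta_n$ coordinatewise, the time-evolved coordinates satisfy $\zeta_n^k\,\e^{i\omega_n^{(4)}(\zeta^k)t}\to\zeta_n\,\e^{i\omega_n^{(4)}(\zeta)t}$ for each $n$, because multiplication by the convergent unimodular phase factor is continuous. Since $\mathcal{S}^t$ conserves the actions, $\|\Phi(u_k(t))\|_{h^{\half+s}_+}=\|\zeta^k\|_{h^{\half+s}_+}$ stays bounded, so the evolved sequence $\Phi(u_k(t))$ is bounded in $h^{\half+s}_+$ and converges coordinatewise to $\Phi(u(t))$. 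By the equivalence recalled above, this is precisely weak convergence $\Phi(u_k(t))\rightharpoonup\Phi(u(t))$ in $h^{\half+s}_+$, hence $u_k(t)\rightharpoonup u(t)$ in $H^s_{r,0}(\T)$, which is the claimed sequential weak continuity. I would close by remarking that the argument genuinely uses $s>0$: the extra weight $n^{2s}$ is what makes the sums defining the frequencies behave continuously under weak limits, and its failure at $s=0$ is exactly what allows the subsequent counterexample showing weak continuity breaks down in $L^2_{r,0}(\T)$.
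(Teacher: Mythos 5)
Your proposal takes a genuinely different route from the paper, but it contains a real gap at both ends of the argument: you transfer weak convergence through the Birkhoff map $\Phi$ and back through $\Phi^{-1}$ as if this were automatic from the homeomorphism property. It is not. $\Phi$ is \emph{nonlinear}, and a nonlinear homeomorphism preserves neither boundedness of sets nor weak convergence of sequences. Concretely, your first step (from $u_0^k\rightharpoonup u_0$ in $H^s_{r,0}(\T)$ deduce that $\zeta^k=\Phi(u_0^k)$ is bounded in $h^{\half+s}_+$ and converges coordinatewise to $\Phi(u_0)$) and your last step (from weak convergence of $\Phi(u_k(t))$ in $h^{\half+s}_+$ deduce $u_k(t)\rightharpoonup u(t)$ in $H^s_{r,0}(\T)$) both require that $\Phi$ and $\Phi^{-1}$ map bounded sets to bounded sets and are weakly sequentially continuous; these are nontrivial theorems about the Birkhoff map (cf.\@ Theorem 6 of \cite{GerardKappelerTopalov2019}, which the paper invokes for exactly this purpose in the ill-posedness section), not formal consequences of bicontinuity. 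The ingredient that repairs this cheaply --- and which never appears in your write-up --- is the compactness of the embedding $H^s_{r,0}(\T)\hookrightarrow L^2_{r,0}(\T)$ for $s>0$. That is precisely the paper's proof: by Rellich, weak convergence in $H^s_{r,0}(\T)$ gives \emph{strong} convergence in $L^2_{r,0}(\T)$; the already-established continuity of $\mathcal{S}^t$ on $L^2_{r,0}(\T)$ (Proposition \ref{prop:GWP} with $s=0$) then gives $u_k(t)\to u(t)$ strongly in $L^2_{r,0}(\T)$, which together with $H^s$-boundedness yields weak convergence in $H^s_{r,0}(\T)$. Note also that once you have strong $L^2$ convergence, your entire frequency-convergence analysis becomes redundant: it duplicates what Lemma \ref{lem:convergence_zeta} already provides under strong convergence of the coordinates.

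Two secondary inaccuracies. First, the parenthetical claim that convergence of $\sum_p p|\zeta_p^k|^2$ ``follows from weak convergence combined with conservation or lower semicontinuity of the norm, provided the norms converge'' is circular: weak lower semicontinuity gives only $\liminf_k\|u_0^k\|_{H^s}\geq\|u_0\|_{H^s}$, never equality, and conservation relates different times, not different $k$. (Your alternative argument --- coordinatewise convergence plus the uniform tail bound $\sum_{p>N}p|\zeta^k_p|^2\leq N^{-2s}\sup_k\|\zeta^k\|^2_{h^{\half+s}_+}$ --- is correct, and it does correctly locate where $s>0$ enters on the sequence side; but it presupposes the $h^{\half+s}_+$-boundedness whose justification is the gap above.) Second, the identity $\|u_k(t)\|_{H^s}=\|u_0^k\|_{H^s}$ is false for $s>0$: the flow conserves the actions, hence the $h^{\half+s}_+$ norm of $\Phi(u_k(t))$, but $\Phi$ is not an isometry at this regularity, so the $H^s$ norm itself is not conserved. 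What you may legitimately use is conservation of $\|\Phi(u_k(t))\|_{h^{\half+s}_+}$, again modulo the bounded-sets property of $\Phi^{\pm1}$.
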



\begin{proof}
Let $u_0^k\in H^s_{r,0}(\T)$, $k\in\N$, be a sequence weakly converging in $H^s_{r,0}(\T)$ to $u_0\in H^s_{r,0}(\T)$. Since  the embedding $H^s_{r,0}(\T)\hookrightarrow L^2_{r,0}(\T)$ is compact, $(u_0^k)_k$ is strongly convergent to $u_0$ in $L^2_{r,0}(\T)$. By continuity of the flow map $\mathcal{S}^t$, one deduces that $(u_k(t))_k$ converges strongly to $u(t)$ in $L^2_{r,0}(\T)$.  This implies that $(u_k(t))_k$ converges weakly to $u(t)$ in $H^s_{r,0}(\T)$.
\end{proof}

\begin{prop}
For all $t\in\R^*$, the extension of the flow map restricted to $L^2_{r,0}(\T)$~: $u_0\in L^2_{r,0}(\T)\mapsto u(t)\in L^2_{r,0}(\T)$ is not sequentially weakly continuous.
\end{prop}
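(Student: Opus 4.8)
The plan is to exhibit an explicit sequence of initial data $(u_0^k)_k$ converging weakly but not strongly in $L^2_{r,0}(\T)$, for which the images $(u_k(t))_k$ fail to converge weakly to the image of the weak limit. The natural candidates are one gap potentials concentrated at high frequency. Working in Birkhoff coordinates, I would take $\zeta^k$ supported on a single index $n_k\to\infty$, say $\zeta^k_{n_k}=c/\sqrt{n_k}$ and $\zeta^k_n=0$ otherwise, so that $\sum_n n|\zeta^k_n|^2=c^2$ stays bounded; then $(u_0^k)_k$ is bounded in $L^2_{r,0}(\T)$, and since each $u_0^k$ oscillates at frequency $n_k\to\infty$, it converges weakly to $0$ in $L^2_{r,0}(\T)$. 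The flow limit of the zero sequence is $0$, so I must show that $u_k(t)$ does \emph{not} converge weakly to $0$ for $t\neq 0$, which will contradict weak sequential continuity.

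The key computation is the frequency of the single excited mode. From formula \eqref{eq:omega_n^4}, with only $\zeta^k_{n_k}$ nonzero and $\gamma:=n_k|\zeta^k_{n_k}|^2=c^2$, the frequency simplifies dramatically: the sums reduce to the single term $p=n_k$ (or $p=q=n_k$), giving
\[
\omega_{n_k}^{(4)}(\zeta^k)
	=n_k^3+n_k\cdot n_k|\zeta^k_{n_k}|^2-3n_k^2|\zeta^k_{n_k}|^2+3n_k|\zeta^k_{n_k}|^4.
\]
The dominant term is $n_k^3$, which is an integer. The next step is to recover $u_k(t)$ from $\zeta_{n_k}(t)=\zeta^k_{n_k}\e^{i\omega_{n_k}^{(4)}t}$: a one gap potential at index $n_k$ is an explicit profile oscillating at spatial frequency $n_k$, modulated by the phase $\omega_{n_k}^{(4)}t$. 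Because $n_k^3$ is an integer, $\e^{in_k^3 t}$ need not return to $1$, but the crucial point is that the spatial oscillation at frequency $n_k$ combined with the time phase produces, after testing against a fixed trigonometric polynomial, a nonvanishing contribution. Concretely I would test $u_k(t)$ against a carefully chosen fixed function (for instance $\e^{in_0 x}$ with $n_0$ fixed, or a product exploiting that the one gap profile has Fourier support on multiples of $n_k$) and show the pairing does not tend to $0$.

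The main obstacle is controlling the inverse Birkhoff map $\Phi^{-1}$ at high frequency precisely enough to read off the relevant Fourier coefficient of $u_k(t)$. Rather than invert $\Phi$ abstractly, I would use the known explicit form of one gap potentials: a one gap potential with action $\gamma$ at index $n$ is, up to translation, $u(x)=\alpha(\gamma,n)\cos(nx+\varphi)$-type profile (more precisely the standard one gap formula), whose $L^2$ norm is $\sqrt{2\gamma}$ independent of $n$, and whose phase evolves by $\omega_n^{(4)}t$ while its spatial translation evolves accordingly. I would then compute the pairing $\langle u_k(t)\,|\,g\rangle$ against a fixed test function $g$ and show that for suitable $g$ and a suitable subsequence of $n_k$ the limit is nonzero, while $\langle 0\,|\,g\rangle=0$. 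The delicate arithmetic point is that $\omega_{n_k}^{(4)}-n_k^3 = n_k\gamma-3\gamma n_k/\! \cdots$ contributes a phase that must be shown not to conspire to cancellation; choosing $n_k$ along an arithmetic progression so that the residual phases converge handles this via a compactness/pigeonhole argument on the torus, after which weak convergence to $0$ is ruled out and the proposition follows.
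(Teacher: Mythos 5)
There is a genuine gap: your construction cannot produce a counterexample, because the sequence you build actually \emph{does} satisfy weak continuity. A one gap potential at index $n_k$ has Fourier support contained in $\{jn_k : j\in\Z\setminus\{0\}\}$ (its explicit form is $u(x)=\frac{n_kw\e^{in_kx}}{1-w\e^{in_kx}}+\mathrm{c.c.}$, a geometric series in $\e^{in_kx}$), and one gap potentials are traveling waves for \eqref{eq:bo4}, so $u_k(t)$ is just a spatial translate of $u_0^k$: it is again a one gap potential at index $n_k$, with the same action, hence with $L^2$ norm bounded and Fourier support escaping to infinity. Consequently, for \emph{any} fixed test function $g\in L^2(\T)$,
\[
|\langle u_k(t)\,|\,g\rangle|
\;\leq\; \|u_k(t)\|_{L^2(\T)}\Big(\sum_{|m|\geq n_k}|\widehat{g}(m)|^2\Big)^{\half}
\longrightarroww{k\to+\infty}{}0 ,
\]
so $u_k(t)\rightharpoonup 0=\mathcal{S}^t(0)$ weakly in $L^2_{r,0}(\T)$. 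In particular, testing against $\e^{in_0x}$ with $n_0$ fixed gives exactly $0$ as soon as $n_k>n_0$. No choice of test function, subsequence, or arithmetic adjustment of the phases can rescue this: the failure is not in the phases but in the fact that all the Fourier mass of $u_k(t)$ lives at frequencies $\geq n_k$.

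The missing idea is that the weak discontinuity must be exhibited around a \emph{nonzero} background, because the mechanism is an interaction between an escaping high-frequency bump and fixed low modes. The paper takes $u_0\neq 0$ and perturbs it in Birkhoff coordinates by setting $|\zeta_p(u_0^k)|^2=|\zeta_p(u_0)|^2+\frac{\alpha}{p}\delta_{k,p}$: the bump at index $k$ has vanishing amplitude but carries the fixed amount $\alpha$ of the conserved quantity $\sum_p p\gamma_p=\half\|u\|_{L^2}^2$, which is lost in the weak limit. Through the term $n\sum_p p\gamma_p$ in \eqref{eq:omega_n^4}, this escaping mass shifts the frequency of every \emph{fixed} mode $p$ by $p\alpha$, so for a mode with $\zeta_p(u_0)\neq 0$ the phase of $\zeta_p(u_k(t))$ converges to $\omega_p^{(4)}(u_0)t+p\alpha t$ modulo $2\pi$ rather than to $\omega_p^{(4)}(u_0)t$; choosing $\alpha\notin\frac{2\pi}{t}\Z$ contradicts the component-wise convergence that weak continuity would force. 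In your construction there is no nonzero low mode available to record the escaping mass, which is precisely why it fails.
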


\begin{proof}
Fix $t\in\R^*$ and $u_0\in L^2_{r,0}(\T)\setminus\{0\}$. We construct a sequence $(u_0^k)_k$ in $L^2_{r,0}(\T)$ weakly convergent to $u_0$ in $L^2_{r,0}(\T)$ but such that $u_k(t)=\mathcal{S}^t(u_0^k)$ is not weakly convergent to $u(t)=\mathcal{S}^t(u_0)$ in $L^2_{r,0}(\T)$.

Let $\alpha>0$ to be chosen later. For $k\in\N$, we choose $(\zeta_p(u_0^k))_p$ converging weakly to $(\zeta_p(u_0))_p$ in $h^{\half}_+$ (so that $u_0^k$ converges weakly to $u_0$ in $L^2_{r,0}(\T)$) and such that
\[
|\zeta_p(u_0^k)|^2=|\zeta_p(u_0)|^2+\frac{\alpha}{p}\delta_{k,p},
	\quad p\geq 1.
\]
For instance, for $p\neq k$ we choose $\zeta_p(u_0^k)=\zeta_p(u_0)$, and for $p=k$, we choose $\zeta_k(u_0^k)=\sqrt{|\zeta_k(u_0)|^2+\frac{\alpha}{k}}\frac{\zeta_k(u_0)}{|\zeta_k(u_0)|}$ if $\zeta_k(u_0)\neq 0$ and $\zeta_k(u_0^k)=\frac{\alpha}{k}$ if $\zeta_k(u_0)= 0$.

Fix $t\neq 0$. If $u_k(t)$ was weakly convergent to $u_0$ in $L^2_{r,0}(\T)$, then $(\zeta_p(u_k(t))_p$ would converge weakly to $(\zeta_p(u(t))_p$ in $h^{\half}_+$, and therefore component by component :
\[
\zeta_p(u_0^k)\e^{i\omega^{(4)}_p(u_0^k)t}
	\longrightarroww{k\to+\infty}{} \zeta_p(u_0)\e^{i\omega^{(4)}_p(u_0)t},
	\quad p\geq 1.
\]
In particular, let $p\geq1$ such that $\zeta_p(u_0)\neq 0$. Then there exists a sequence $(n_k)_k$ of integers such that
\[
\omega^{(4)}_p(u_0^k)t+2\pi n_k
	\longrightarroww{k\to+\infty}{} \omega^{(4)}_p(u_0)t.
\]
From the expression \eqref{eq:omega_n^4} of $\omega^{(4)}_p(u_0^k)$ and the strong convergence of $(\zeta_p(u_0^k))_p$ to $(\zeta_p(u_0))_p$ in $\ell^2_+=\{(\zeta_p)_{p\geq 1}\mid \sum_{p\geq1}|\zeta_p|^2<+\infty\}$ by compactness, we get
\[
\sum_{p=1}^{+\infty}p|\zeta_p(u_0)|^2+\alpha+\frac{2\pi n_k}{t}
	=\sum_{p=1}^{+\infty}p|\zeta_p(u_0^k)|^2+\frac{2\pi n_k}{t}
	\longrightarroww{k\to+\infty}{} \sum_{p=1}^{+\infty}p|\zeta_p(u_0)|^2.
\] 
We get a contradiction by choosing $\alpha\not\in \frac{2\pi}{t}\Z$.
\end{proof}
\subsection{Ill-posedness in \texorpdfstring{$H^{-s}_{r,0}(\T)$, $s>0$}{H^{-s}, s>0}}\label{subsection:ill_posed}

\begin{prop}
For all $t>0$, there is no continuous local extension of the flow map $\mathcal{S}^t$ to $H^{-s}_{r,0}(\T)$ for $0<s<\half$ in the distribution sense.
\end{prop}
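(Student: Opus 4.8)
The plan is to exploit the explicit form of the frequencies \eqref{eq:omega_n^4} together with the fact that, for $0<s<\half$, a perturbation supported at a single very high mode can be made arbitrarily small in $H^{-s}_{r,0}(\T)$ while producing an order-one shift of the low-mode frequency $\omega_1^{(4)}$. Throughout I use that, by the result of Gérard, Kappeler and Topalov, $\Phi$ restricts to a homeomorphism between $H^{-s}_{r,0}(\T)$ and the space $h^{\frac{1}{2}-s}_+=\{(\zeta_n)_{n\geq1}\mid \sum_{n\geq1}n^{1-2s}|\zeta_n|^2<+\infty\}$ in the range $0<s<\half$, so that convergence in $H^{-s}_{r,0}(\T)$ is equivalent to convergence of the Birkhoff coordinates in $h^{\frac{1}{2}-s}_+$.

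First I fix $a\neq0$ and consider the one gap potential $u_0$ with $\zeta_1(u_0)=a$ and $\zeta_n(u_0)=0$ for $n\geq2$; its frequency is $\Omega:=\omega_1^{(4)}(u_0)=1-2|a|^2+3|a|^4$. I then introduce, for a sequence $N_k\to+\infty$ and a constant $c>0$, the two gap potentials $u_0^k$ defined by $\zeta_1(u_0^k)=a$, $|\zeta_{N_k}(u_0^k)|^2=c/N_k$ and $\zeta_n(u_0^k)=0$ otherwise. Since $N_k^{1-2s}(c/N_k)=c\,N_k^{-2s}\to0$, one has $u_0^k\to u_0$ in $H^{-s}_{r,0}(\T)$. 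These are finite gap potentials, so the flow acts on them explicitly.

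The key computation is that of $\omega_1^{(4)}(u_0^k)$. In formula \eqref{eq:omega_n^4} with $n=1$ the crucial term is $\sum_{p\geq1}p|\zeta_p(u_0^k)|^2=|a|^2+N_k\cdot(c/N_k)=|a|^2+c$, which stays of order one; the remaining terms, involving $\min(p,1)=1$ and the quartic sum, only depend on $|a|^2+c/N_k$ and converge to their values at $u_0$. Hence $\omega_1^{(4)}(u_0^k)\to\Omega+c$. Consequently the first Birkhoff coordinate of the flowed potential satisfies $\zeta_1(\mathcal{S}^t u_0^k)=a\,\e^{i\omega_1^{(4)}(u_0^k)t}\to a\,\e^{i(\Omega+c)t}$, while the $N_k$-th coordinate has squared weighted norm $N_k^{1-2s}(c/N_k)=c\,N_k^{-2s}\to0$ and all other coordinates vanish. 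Therefore $\Phi(\mathcal{S}^t u_0^k)$ converges in $h^{\frac{1}{2}-s}_+$ to $(a\,\e^{i(\Omega+c)t},0,0,\dots)$, and by continuity of $\Phi^{-1}$ the sequence $\mathcal{S}^t u_0^k$ converges \emph{strongly} in $H^{-s}_{r,0}(\T)$ to $w:=\Phi^{-1}(a\,\e^{i(\Omega+c)t},0,\dots)$.

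Finally I derive the contradiction. Since $u_0$ is a finite gap potential, any extension agreeing with the flow sends $u_0$ to $\mathcal{S}^t u_0=\Phi^{-1}(a\,\e^{i\Omega t},0,\dots)$. As $\mathcal{S}^t u_0^k\to w$ strongly in $H^{-s}_{r,0}(\T)$, it also converges to $w$ in the distribution sense; if $\mathcal{S}^t$ were a continuous map from $H^{-s}_{r,0}(\T)$ into the distributions, continuity at $u_0$ would force $w=\mathcal{S}^t u_0$, i.e.\@ $a\,\e^{i(\Omega+c)t}=a\,\e^{i\Omega t}$. Choosing $c>0$ with $ct\notin2\pi\Z$ (possible for every fixed $t>0$, e.g.\@ $c=\pi/t$) and using $a\neq0$ contradicts this equality, which rules out any such continuous extension. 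The heart of the argument, and the only genuinely delicate point, is the gain encoded in the term $n\sum_p p|\zeta_p|^2$ of \eqref{eq:omega_n^4}: the mismatch between the weight $n^{1-2s}$ controlling the $H^{-s}$ norm (with $1-2s<1$ as soon as $s>0$) and the weight $p$ appearing in the frequency is exactly what lets a perturbation vanishing in $H^{-s}$ create a persistent phase shift. Arguing via \emph{strong} $H^{-s}$ convergence of the flowed sequence to a wrong limit, rather than via mere divergence, is what makes the conclusion robust against the weaker distribution topology.
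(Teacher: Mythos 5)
Your proof is correct, but it takes a genuinely different route from the paper's. The paper exhibits the discontinuity at a \emph{rough} datum $u_0\in H^{-s}_{r,0}(\T)\setminus L^2_{r,0}(\T)$: it exploits the divergence of $\sum_p p|\zeta_p(u_0)|^2$, renormalizes the flow by the Galilean shift $v_k(t)=u_k(t,\cdot-\tau_k t)$ with $\tau_k=\sum_p p|\zeta_p(u_0^k)|^2$ so that the shifted coordinates converge, and then tunes the truncations so that the residual phase satisfies $\e^{i\tau_k n t}=(-1)^k$; the contradiction is that the flowed sequence can have \emph{no} distributional limit at all, and this step requires the weak sequential continuity of $\Phi$ (Theorem 6 of Gérard--Kappeler--Topalov) to pass from distributional convergence to coordinate convergence. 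You instead exhibit the discontinuity at a \emph{smooth} datum (a one gap potential), perturbing it by a single high-mode bubble of action $c/N_k$, which is $o(1)$ in $h^{\half-s}_+$ precisely because $s>0$ yet contributes the fixed amount $c$ to $\sum_p p\gamma_p$; the flowed sequence then converges \emph{strongly} in $H^{-s}_{r,0}(\T)$, but to the wrong limit (in fact to the space translate $(\mathcal{S}^tu_0)(\cdot+ct)$, as the phase shift $\e^{inct}$ of the coordinates shows), so you only need the homeomorphism property of $\Phi$ on $H^{-s}_{r,0}(\T)$ (Theorem 5 of the same reference) and uniqueness of distributional limits. Your construction is essentially the paper's own proof of the failure of \emph{weak} sequential continuity in $L^2_{r,0}(\T)$ (the bubble $\alpha/p$ at mode $p=k$), promoted to a strong-topology counterexample by the gain $N_k^{-2s}$; it is simpler, avoids the renormalization machinery, and shows that the failure of continuity is not tied to rough initial data. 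What the paper's argument buys in exchange is that its bad points, $H^{-s}_{r,0}(\T)\setminus L^2_{r,0}(\T)$, form a dense set, so continuous extensions are ruled out on every open subset; your argument as written only localizes near one gap potentials, which are not dense, though it extends verbatim to arbitrary finite gap base points (replace the single rotated coordinate by the translation identity $\zeta_n(u(\cdot+ct))=\zeta_n(u)\e^{inct}$ and choose $c$ so that $\e^{in_0ct}\neq1$ for some nonzero gap $n_0$), and finite gap potentials are dense, so the same strength is recovered with a two-line modification.
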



\begin{proof}
Let us fix $0<s<\half$ and an initial data $u_0\in H^{-s}_{r,0}(\T)\setminus L^2_{r,0}(\T)$. From \cite{GerardKappelerTopalov2019}, Theorem 5, the Birkhoff map extends by continuity as an homeomorphism
\[
\Phi:u\in H^{-s}_{r,0}\mapsto \Phi(u)=(\zeta_n(u))_{n\geq1}\in h^{\half-s}_+
\]
where
\[
h^{\half-s}_+
	=\Big\{(\zeta_n)_{n\geq 1}\mid \sum_{n\geq 1}n^{1-2s}|\zeta_n|^2<+\infty\Big\}.
\]
Therefore, $(\zeta_n(u_0))_{n\geq 1}:=\Phi(u_0)\in h^{\half-s}_+$ is well defined. Let $u_0^k$, $k\in\N$, be a sequence finite gap initial data, to be chosen later, such that $u_0^k$ converges in $H^{-s}_{r,0}(\T)$ to $u_0$. Write
\[
\Phi(u_0^k)=(\zeta_n(u_0^k)\un_{n\leq N_k})_n,
	\quad k\in\N.
\]
Since $u_0^k$ is a finite gap potential, it belongs to $L^2_{r,0}(\T)$. Recall that
\[
\omega_n^{(4)}(u_0^k)-n\sum_{p=1}^{N_k}p|\zeta_p(u_0^k)|^2
	=\widetilde{\omega_n}(u_0^k)
\]
where
\[
\widetilde{\omega_n}(u_0^k)
	=n^3-3\sum_{p=1}^{N_k}\min(p,n)^2|\zeta_p(u_0^k)|^2+3\sum_{p=1}^{N_k}\sum_{q=1}^{N_k}\min(p,q,n)|\zeta_p(u_0^k)|^2|\zeta_q(u_0^k)|^2.
\]
Since $u_0^k$ converges to $u_0$ in $H^{-s}_{r,0}(\T)$, the series $\sum_{p\geq 1}|\zeta_p(u_0)|^2$ is convergent, and
\begin{equation*}
\sum_{p\geq 1}|\zeta_p(u_0^k)|^2
	\longrightarroww{k\to+\infty}{} \sum_{p\geq 1}|\zeta_p(u_0)|^2.
\end{equation*}
In particular, the term $\widetilde{\omega_n}(u_0^k)$ converges as $k$ goes to infinity to
\[
\widetilde{\omega_n}(u_0)
	=n^3-3\sum_{p=1}^{+\infty}\min(p,n)^2|\zeta_p(u_0)|^2+3\sum_{p=1}^{+\infty}\sum_{q=1}^{+\infty}\min(p,q,n)|\zeta_p(u_0)|^2|\zeta_q(u_0)|^2.
\]

For $k\in\N$, let 
\[\tau_k:=\sum_{p=1}^{N_k}p|\zeta_p(u_0^k)|^2=\frac{1}{2}\|u_0^k\|_{L^2(\T)}^2
\]
and
\[
v_k(t,\cdot):=u_k(t,\cdot-\tau_k t).
\]
We use the following identity from the proof of Proposition B.1.\@ in \cite{GerardKappeler2019} :
\[
\zeta_n(u(\cdot+\tau))=\zeta_n(u)\e^{i\tau n},
	\quad \tau\in\R,\quad u\in L^2_{r,0}(\T),
\]
to deduce that for $n\in\N$,
\begin{align*}
\zeta_n(v_k(t))
	&=\zeta_n(u_k(t))\e^{-in\tau_k t}\\
	&=\zeta_n(u_0^k)\e^{i(\omega^{(4)}_n(u_0^k)-n\tau_k) t}.
\end{align*}
Since
\[
\omega_n^{(4)}(u_0^k)-n\tau_k
	\longrightarroww{k\to+\infty}{}\widetilde{\omega_n}(u_0),
\]
the sequence \((\zeta_n(v_k(t)))_k\)
is convergent :
\begin{equation}\label{eq:limitv1}
\zeta_n(v_k(t))
	\longrightarroww{k\to +\infty}{} \zeta_n(u_0)\e^{i\widetilde{\omega_n}(u_0)t}.
\end{equation}

Let $t>0$. If there was a local extension of the flow map $\mathcal{S}^t$ in the distribution sense, then $u_k(t)$ would be weakly convergent to $u(t)$ in $H^{-s}_{r,0}(\T)$. Applying the Birkhoff map, which is weakly sequentially continuous (see \cite{GerardKappelerTopalov2019}, Theorem 6), $\Phi(u_k(t))$ would converge weakly to $\Phi(u(t))$ in $h^{\half-s}_+$. In particular, for all $n$,
\begin{equation}\label{eq:limitv2}
\zeta_n(v_k(t))\e^{i\tau_k n t}
	=\zeta_n(u_k(t))
	\longrightarroww{k\to+\infty}{} \zeta_n(u(t)).
\end{equation}

We deduce from \eqref{eq:limitv1} and \eqref{eq:limitv2} that if $\zeta_n(u_0)\neq 0$, then
\begin{equation}\label{eq:limitv3}
\e^{i\tau_k n t}\longrightarroww{k\to+\infty}{} \frac{\zeta_n(u(t))}{\zeta_n(u_0)}\e^{-i\widetilde{\omega_n}(u_0)t}.
\end{equation}
We construct the sequence $(u_0^k)_k$ in order to contradict this latter point. Let $n\in \N$ such that $\zeta_n(u_0)\neq 0$. Fix $k\in \N$. From the fact that $u_0$ does not belong to $L^2_{r,0}(\T)$,
\[
\sum_{p> k}p|\zeta_p(u_0)|^2=+\infty,
\]
therefore one can choose $N_k\geq k+1$ such that
\[
\sum_{p=k+1}^{N_k}p|\zeta_p(u_0)|^2\geq \frac{2\pi}{nt}.
\]
Let $0<\alpha_k<1$ such that there exists an integer $m_k$ such that
\[
\sum_{p\leq k}p|\zeta_p(u_0)|^2+\alpha_k\sum_{p=k+1}^{N_k}p|\zeta_p(u_0)|^2
	=\frac{1}{nt}(k\pi+2\pi m_k).
\]

We define $u_0^k$ by
\[
\zeta_p(u_0^k)
	=
\begin{cases}
\zeta_p(u_0) \text{ if } p\leq k
\\
\sqrt{\alpha_k}\zeta_p(u_0) \text{ if } k<p\leq N_k
\\
0 \text{ if } N_k<p
\end{cases},
	\quad p\in\N.
\]
By construction, $u_0^k$ is finite gap and converges to $u_0$ in $H^{-s}_{r,0}(\T)$. However,
\begin{align*}
\tau_k
	&=\sum_{p\leq k}p|\zeta_p(u_0)|^2+\alpha_k\sum_{p=k+1}^{N_k}p|\zeta_p(u_0)|^2\\
	&=\frac{1}{nt}(k\pi+2\pi m_k),
\end{align*}
which implies that
\[
\e^{i\tau_k n t}=(-1)^k.
\]
In particular, the sequence $(\e^{i\tau_k n t})_k$ is not convergent, and we get a contradiction with \eqref{eq:limitv3}.
\end{proof}

\begin{rk}\label{rk:GWP}
We expect that with a similar argument, one can prove the following fact. For the higher equations of the hierarchy, the well-posedness threshold increases by $\half$ for each equation (see Remark \ref{rk:WP}).
\end{rk}

\section{Traveling waves for the fourth Hamiltonian}\label{part:traveling_waves}

In this part, we classify all traveling wave solutions to equation \eqref{eq:bo4}
\[
\partial_x(-cu-\partial_{xx}u-\frac{3}{2}u^2H\partial_x u-\frac{3}{2}H(u\partial_x u)+u^3)=0.
\]
A traveling wave of speed $c\in\R$ is a solution to \eqref{eq:bo4} of the form $u(t,x)=u_0(x+ct)$.

The argument of \cite{GerardKappeler2019}, Proposition B.1., applies for the higher equations of the Benjamin-Ono hierarchy, implying that a potential $u_0$ is a traveling wave solution to \eqref{eq:bo4} of speed $c\in\R$ if and only if for all $t\in\R$ and $n\geq 1$,
\[
\e^{icnt}\zeta_n(u_0)=\e^{i\omega_n^{(4)}(u_0)t}\zeta_n(u_0),
\]
or in other words :
\begin{equation}\label{condition}
\forall n\geq 1, \text{ if } \zeta_n(u_0)\neq 0, \text{ then } cn=\omega_n^{(4)}(u_0).
\end{equation}
In particular, all one gap potentials are traveling wave solutions. Note that these potentials are the only traveling wave solutions to the Benjamin-Ono equation and write (see \cite{GerardKappeler2019}, Appendix B)
\[
u_0(x)=\frac{pw\e^{ipx}}{1-w\e^{ipx}}+\frac{p\overline{w}\e^{-ipx}}{1-\overline{w}\e^{-ipx}}
\]
with the nonzero gap being at index $p\geq 1$ and $w=\frac{\zeta_p(u_0)}{\sqrt{p+\gamma_p(u_0)}}$. As we will see in this section, the one gap potentials are not the only traveling wave solutions for equation \eqref{eq:bo4}.

In part \ref{subsection:classification}, we first show that the traveling waves are necessarily one gap and two gap potentials, then provide a classification of the two gap traveling waves in term of their actions. In part \ref{subsection:stability}, we prove that the one gap traveling waves are orbitally stable whereas the two gap traveling waves are orbitally unstable.

\subsection{Classification of traveling wave solutions}\label{subsection:classification}

In the following, it will be more convenient to work with the actions $\gamma_p=|\zeta_p(u_0)|^2$. Formula \eqref{eq:omega_n^4} for $\omega_n^{(4)}(u_0)$
\begin{align*}
\omega_n^{(4)}(u_0)
	&=n^3+n\sum_{p\geq 1}p\gamma_p-3\sum_{p\geq1}\min(p,n)^2\gamma_p+3\sum_{p,q\geq 1}\min(p,q,n)\gamma_p\gamma_q
\end{align*}
shows that $\frac{\omega_n^{(4)}(u_0)}{n}$ is equivalent to $n^2$ as $n$ goes to infinity, therefore, from condition \eqref{condition}, the traveling waves for the third equation of the hierarchy \eqref{eq:bo4} are necessarily finite gap solutions.

\begin{prop}
Let $u_0$ be a two gap potential, and $p<q$ be the indices of the two nonzero gaps with gaps $\gamma_p>0$ and $\gamma_q>0$. Then $u_0$ is a traveling wave for equation \eqref{eq:bo4} if and only if
\[
0< \gamma_p< \half\left(p+\sqrt{p^2+4q\frac{p+q}{3}}\right)
\]
and
\[
\gamma_q=\frac{q\frac{p+q}{3}+p\gamma_p-\gamma_p^2}{2\gamma_p+q}.
\]
\end{prop}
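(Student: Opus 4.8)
The plan is to reduce the statement to the characterization \eqref{condition} of traveling waves. Since $u_0$ is a two gap potential, $\zeta_n(u_0)\neq 0$ only for $n\in\{p,q\}$, so \eqref{condition} holds for some speed $c\in\R$ if and only if simultaneously $cp=\omega_p^{(4)}(u_0)$ and $cq=\omega_q^{(4)}(u_0)$. Eliminating $c$, being a traveling wave is equivalent to the single scalar identity
\[
\frac{\omega_p^{(4)}(u_0)}{p}=\frac{\omega_q^{(4)}(u_0)}{q},
\]
which I would then turn into a polynomial relation between $\gamma_p$ and $\gamma_q$.

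First I would substitute \eqref{eq:omega_n^4}, in which the index sums collapse to $\{p,q\}$ since all other actions vanish. Writing
\[
\frac{\omega_n^{(4)}(u_0)}{n}=n^2+\sum_a a\gamma_a-\frac{3}{n}\sum_a\min(a,n)^2\gamma_a+\frac{3}{n}\sum_{a,b}\min(a,b,n)\gamma_a\gamma_b,
\]
the linear term $\sum_a a\gamma_a$ is independent of $n$ and hence cancels in the difference. Using $p<q$, the remaining $\min$-terms are explicit: for $n=p$ they contribute $p(\gamma_p+\gamma_q)$ and $(\gamma_p+\gamma_q)^2$, while for $n=q$ they contribute $\frac{p^2}{q}\gamma_p+q\gamma_q$ and $\frac{p}{q}\gamma_p^2+\frac{2p}{q}\gamma_p\gamma_q+\gamma_q^2$.

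The heart of the computation is the algebraic simplification of the resulting identity. After collecting terms I expect every monomial to carry a common factor $q-p$; dividing by $q-p>0$ and clearing the denominator $q$ should leave the clean relation
\[
3\gamma_q(q+2\gamma_p)=q(p+q)+3p\gamma_p-3\gamma_p^2,
\]
which rearranges exactly into the announced formula $\gamma_q=\dfrac{q\frac{p+q}{3}+p\gamma_p-\gamma_p^2}{2\gamma_p+q}$. The main obstacle here is purely bookkeeping: tracking the coefficients and signs coming out of the double sum is error-prone, and the decisive step is spotting the common factor $q-p$ that makes the whole expression collapse to a relation linear in $\gamma_q$.

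Finally, to recover the constraint on $\gamma_p$ I would impose the admissibility condition $\gamma_q>0$. Since the denominator $2\gamma_p+q$ is positive, this is equivalent to positivity of the numerator, i.e. to the quadratic inequality $\gamma_p^2-p\gamma_p-q\frac{p+q}{3}<0$. Its two roots are $\half\bigl(p\pm\sqrt{p^2+4q\frac{p+q}{3}}\bigr)$; the lower root is negative, so combined with the requirement $\gamma_p>0$ the inequality holds exactly on the interval $0<\gamma_p<\half\bigl(p+\sqrt{p^2+4q\frac{p+q}{3}}\bigr)$, which completes the equivalence.
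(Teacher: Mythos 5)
Your proposal is correct and follows essentially the same route as the paper: reduce via condition \eqref{condition} to the equality $\frac{\omega_p^{(4)}(u_0)}{p}=\frac{\omega_q^{(4)}(u_0)}{q}$, compute the difference using \eqref{eq:omega_n^4} with the sums restricted to $\{p,q\}$, divide out the common factor (your $q-p$ is the paper's $3(1-\frac{p}{q})$ up to a constant), and obtain the relation $(2\gamma_p+q)\gamma_q=q\frac{p+q}{3}+p\gamma_p-\gamma_p^2$, with the interval for $\gamma_p$ coming from positivity of $\gamma_q$. The algebra you sketch (the collapsed $\min$-sums and the resulting linear-in-$\gamma_q$ identity) checks out against the paper's computation.
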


\begin{proof}
Let $u_0$ be such a two gap potential. Then $u_0$ is a traveling wave if and only if $\frac{\omega_p^{(4)}(u_0)}{p}=\frac{\omega_q^{(4)}(u_0)}{q}$, where
\[
\frac{\omega_p^{(4)}(u_0)}{p}
	=p^2+(p\gamma_p+q\gamma_q)-3p(\gamma_p+\gamma_q)+3(\gamma_p^2+2\gamma_p\gamma_q+\gamma_q^2)
\]
and
\[
\frac{\omega_q^{(4)}(u_0)}{q}
	=q^2+(p\gamma_p+q\gamma_q)-3(\frac{p^2}{q}\gamma_p+q\gamma_q)+3(\frac{p}{q}\gamma_p^2+2\frac{p}{q}\gamma_p\gamma_q+\gamma_q^2).
\]
Taking the difference of the two terms, $\frac{\omega_p^{(4)}(u_0)}{p}=\frac{\omega_q^{(4)}(u_0)}{q}$, if and only if
\[
0=q^2-p^2+3\left(-p(\frac{p}{q}-1)\gamma_p-(q-p)\gamma_q+(\frac{p}{q}-1)\gamma_p^2+2(\frac{p}{q}-1)\gamma_p\gamma_q\right).
\]
Dividing by $3(1-\frac{p}{q})$, this necessary and sufficient condition becomes
\[
0=q\frac{p+q}{3}+p\gamma_p-q\gamma_q-\gamma_p^2-2\gamma_p\gamma_q,
\]
i.e.
\begin{equation}\label{eq:twogap1}
(2\gamma_p+q)\gamma_q
	=q\frac{p+q}{3}+p\gamma_p-\gamma_p^2.
\end{equation}
Fix $\gamma_p>0$ and $\gamma_q$ satisfying this latter equality. We get that $\gamma_q>0$ if and only if the left-hand side of the equality is positive, i.e.
\begin{equation}\label{eq:twogap2}
0< \gamma_p< \half\left(p+\sqrt{p^2+4q\frac{p+q}{3}}\right).
\end{equation}
Conversely, any two gap solution $u_0$ satisfying \eqref{eq:twogap1} and \eqref{eq:twogap2} verifies $\frac{\omega_p^{(4)}(u_0)}{p}=\frac{\omega_q^{(4)}(u_0)}{q}$, therefore is a traveling wave solution.
\end{proof}

Let us give an idea of the form of a two gap potential $u_0$ with gaps at indices $p<q$. By Theorem 3 in \cite{GerardKappeler2019}, the extension of $\Pi u_0$ as an holomorphic function on the unit disc $\{z\in\C\mid|z|<1\}$ satisfies
\[
\Pi u_0(z)=-z\frac{Q'(z)}{Q(z)}
\]
where $Q(z)=\det(\mathrm{Id}-zM)$ and $M=(M_{nm})_{0\leq n,m\leq q-1}$ is a $q\times q$ matrix defined by
\[
M_{nm}=
\begin{cases}
\delta_{m,n+1} &\text{ if } \zeta_{n+1}=0
\\
\sqrt{\mu_{n+1}}\sqrt{\frac{\kappa_m}{\kappa_{n+1}}}\frac{\zeta_m(u_0)\overline{\zeta_{n+1}(u_0)}}{(\lambda_m-\lambda_n-1)}  &\text{ if } \zeta_{n+1}\neq 0
\end{cases}.
\]
A precise definition of $\mu_{n}$, $\kappa_{n}$ and $\lambda_n$ can be found in \cite{GerardKappeler2019}.
Therefore, $Q$ and $Q'$ respectively write
\[
Q(z)=1-z^pM_{p-1,0}-z^qM_{q-1,0}M_{p-1,p}
\]
and
\[
Q'(z)=-pz^{p-1}M_{p-1,0}-qz^{q-1}M_{q-1,0}M_{p-1,p}
\]
This leads to
\[
u_0(x)
	=\frac{p\e^{ipx}M_{p-1,0}+q\e^{iqx}M_{q-1,0}M_{p-1,p}}{1-\e^{ipx} M_{p-1,0}-\e^{iqx}M_{q-1,0}M_{p-1,p}}
	+\frac{p\e^{-ipx}\overline{M_{p-1,0}}+q\e^{-iqx}\overline{M_{q-1,0}M_{p-1,p}}}{1-\e^{-ipx} \overline{M_{p-1,0}}-\e^{-iqx}\overline{M_{q-1,0}M_{p-1,p}}}.
\]

\begin{prop}
There are no three gap traveling waves.
\end{prop}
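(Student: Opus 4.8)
The plan is to reduce the statement to the incompatibility of an algebraic system. By condition~\eqref{condition}, a three gap potential with nonzero indices $p<q<r$ and actions $\gamma_p,\gamma_q,\gamma_r>0$ is a traveling wave if and only if $\frac{\omega_p^{(4)}}{p}=\frac{\omega_q^{(4)}}{q}=\frac{\omega_r^{(4)}}{r}$, which is a system of two scalar equations in the three unknowns $\gamma_p,\gamma_q,\gamma_r$. I would compute these two equations explicitly from~\eqref{eq:omega_n^4}, exactly as in the two gap case, evaluating $\omega_n^{(4)}$ at $n=p,q,r$ and simplifying the terms $\min(p',n)$ according to the ordering of the three indices, and then show that the resulting system has no solution in the positive octant $\gamma_p,\gamma_q,\gamma_r>0$.

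For the first equation $\frac{\omega_p^{(4)}}{p}=\frac{\omega_q^{(4)}}{q}$, clearing the common factor $q-p>0$ leads to essentially the same computation as in the two gap proposition, the only difference being that from the viewpoint of the smallest index $p$ both $q$ and $r$ enter through $\min(\cdot,p)=p$, so the two larger actions appear only through their sum. I expect it to reduce to the two gap relation~\eqref{eq:twogap1} with $\gamma_q$ replaced by $\gamma_q+\gamma_r$, namely
\[
(2\gamma_p+q)(\gamma_q+\gamma_r)=q\frac{p+q}{3}+p\gamma_p-\gamma_p^2.
\]
For the second equation $\frac{\omega_q^{(4)}}{q}=\frac{\omega_r^{(4)}}{r}$, I would clear the factor $r-q>0$ and then subtract a suitable multiple of the first equation so as to cancel every term containing $\gamma_p$; the point is that this leaves a relation free of $\gamma_p$,
\[
(r-p)\big[(p+q+r)-3\gamma_r\big]=3\gamma_q\big[\gamma_q+2\gamma_r-(p+q)\big].
\]

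The contradiction then comes from a sign analysis. Set $s=\gamma_q+\gamma_r$. Read as a quadratic in $\gamma_p$, the first equation is $\gamma_p^2+(2s-p)\gamma_p+q\left(s-\frac{p+q}{3}\right)=0$, with product of roots $q\left(s-\frac{p+q}{3}\right)$ and sum of roots $p-2s$; hence it has a strictly positive root $\gamma_p$ only if $s<\frac{p+q}{3}$, since for $s\geq\frac{p+q}{3}$ the product of the roots is nonnegative and their sum is negative, so no root is positive. But if $\gamma_q,\gamma_r>0$ and $s<\frac{p+q}{3}$, then $3\gamma_r<p+q$ forces $(p+q+r)-3\gamma_r>r>0$, so the left-hand side of the second relation is positive, whereas $\gamma_q+2\gamma_r<2s<\frac{2(p+q)}{3}<p+q$ makes its right-hand side negative. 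This is impossible, so no admissible $\gamma_p,\gamma_q,\gamma_r$ exist and there are no three gap traveling waves.

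The step I expect to be the main obstacle is the bookkeeping in the second equation: one must expand $\omega_q^{(4)}$ and $\omega_r^{(4)}$ carefully and identify the precise multiple of the first relation whose subtraction eliminates all dependence on $\gamma_p$ and produces the sign-definite form above. Once that combination is found, the quadratic root count and the final sign contradiction are immediate.
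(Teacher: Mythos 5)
Your proposal is correct and takes essentially the same route as the paper: your first relation is exactly the paper's equation \eqref{eq:pq}, your second ($\gamma_p$-free) relation is an algebraic rearrangement of the paper's \eqref{eq:pr-pq}, and your final sign contradiction under the constraint $\gamma_q+\gamma_r<\frac{p+q}{3}$ is precisely the paper's concluding argument. The only cosmetic differences are that you eliminate $\gamma_p$ by combining the $q$--$r$ equation with the $p$--$q$ equation instead of the $p$--$r$ equation, and you derive the key inequality from a root analysis of the quadratic in $\gamma_p$ rather than by solving \eqref{eq:pq} for $\gamma_q+\gamma_r$.
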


\begin{proof}
Let $p<q<r$ the indices for the nonzero gaps for a three gap potential $u_0$. The speeds for each mode write
\begin{multline*}
\frac{\omega_p^{(4)}(u_0)}{p}
	= p^2+(p\gamma_p+q\gamma_q+r\gamma_r)-3p(\gamma_p+\gamma_q+\gamma_r)\\
	+3(\gamma_p^2+2\gamma_p(\gamma_q+\gamma_r)+\gamma_q^2+2\gamma_q\gamma_r+\gamma_r^2),
\end{multline*}
\begin{multline*}
\frac{\omega_q^{(4)}(u_0)}{q}
	= q^2+(p\gamma_p+q\gamma_q+r\gamma_r)-3(\frac{p^2}{q}\gamma_p+q(\gamma_q+\gamma_r))\\
	+3(\frac{p}{q}\gamma_p^2+2\frac{p}{q}\gamma_p(\gamma_q+\gamma_r)+\gamma_q^2+2\gamma_q\gamma_r+\gamma_r^2)
\end{multline*}
and
\begin{multline*}
\frac{\omega_r^{(4)}(u_0)}{r}
	= r^2+(p\gamma_p+q\gamma_q+r\gamma_r)-3(\frac{p^2}{r}\gamma_p+\frac{q^2}{r}\gamma_q+r\gamma_r)\\
	+3(\frac{p}{r}\gamma_p^2+2\frac{p}{r}\gamma_p(\gamma_q+\gamma_r)+\frac{q}{r}\gamma_q^2+2\frac{q}{r}\gamma_q\gamma_r+\gamma_r^2).
\end{multline*}

Let us now subtract the equalities.
\begin{multline*}
\frac{\omega_q^{(4)}(u_0)}{q}-\frac{\omega_p^{(4)}(u_0)}{p}
	= q^2-p^2
	-3((\frac{p}{q}-1)p\gamma_p+(q-p)(\gamma_q+\gamma_r))\\
	+3((\frac{p}{q}-1)\gamma_p^2+2(\frac{p}{q}-1)\gamma_p(\gamma_q+\gamma_r)).
\end{multline*}
Dividing by $3(1-\frac{p}{q})$, we get that if $\frac{\omega_q^{(4)}(u_0)}{q}=\frac{\omega_p^{(4)}(u_0)}{p}$, then
\[
0
	=q\frac{p+q}{3}+p\gamma_p-q(\gamma_q+\gamma_r)-\gamma_p^2-2\gamma_p(\gamma_q+\gamma_r),
\]
or equivalently
\begin{equation}\label{eq:pq}
(q+2\gamma_p)(\gamma_q+\gamma_r)
	=q\frac{p+q}{3}+p\gamma_p-\gamma_p^2.
\end{equation}

Doing the same for indices $p$ and $r$,
\begin{multline*}
\frac{\omega_r^{(4)}(u_0)}{r}-\frac{\omega_p^{(4)}(u_0)}{p}
	= r^2-p^2
	-3((\frac{p}{r}-1)p\gamma_p+(\frac{q^2}{r}-p)\gamma_q+(r-p)\gamma_r)\\
	+3((\frac{p}{r}-1)\gamma_p^2+2(\frac{p}{r}-1)\gamma_p(\gamma_q+\gamma_r)+(\frac{q}{r}-1)\gamma_q^2+2(\frac{q}{r}-1)\gamma_q\gamma_r).
\end{multline*}

Dividing by $3(1-\frac{p}{r})$, if $\frac{\omega_r^{(4)}(u_0)}{r}=\frac{\omega_p^{(4)}(u_0)}{p}$, then
\begin{equation}\label{eq:pr}
0
	= r\frac{r+p}{3}
	+p\gamma_p+\frac{pr-q^2}{r-p}\gamma_q-r\gamma_r
	-\gamma_p^2-2\gamma_p(\gamma_q+\gamma_r)-\frac{r-q}{r-p}\gamma_q^2-2\frac{r-q}{r-p}\gamma_q\gamma_r.
\end{equation}

Subtracting \eqref{eq:pr} and \eqref{eq:pq}, if $\frac{\omega_p^{(4)}(u_0)}{p}=\frac{\omega_q^{(4)}(u_0)}{q}=\frac{\omega_r^{(4)}(u_0)}{r}$, then
\begin{equation*}
(r-q+2\frac{r-q}{r-p}\gamma_q)\gamma_r
	=\frac{r^2-q^2+p(r-q)}{3}+(\frac{pr-q^2}{r-p}+q)\gamma_q-\frac{r-q}{r-p}\gamma_q^2.
\end{equation*}
Since
\[
\frac{pr-q^2}{r-p}+q=\frac{pr-q^2+qr-pq}{r-p}=(p+q)\frac{r-q}{r-p},
\]
by multiplication by $\frac{r-p}{r-q}$, we get
\begin{equation}\label{eq:pr-pq}
(r-p+2\gamma_q)\gamma_r
	=(r-p)\frac{r+q+p}{3}+(p+q)\gamma_q-\gamma_q^2.
\end{equation}

Now, if $u_0$ is a traveling wave, the first equality \eqref{eq:pq} implies
\begin{align*}
\gamma_q+\gamma_r
	&=\frac{q\frac{p+q}{3}+p\gamma_p-\gamma_p^2}{q+2\gamma_p}\\
	&=\frac{p+q}{3}+\frac{-2\gamma_p\frac{p+q}{3}+p\gamma_p-\gamma_p^2}{q+2\gamma_p}\\
	&=\frac{p+q}{3}+\frac{\gamma_p}{3(q+2\gamma_p)}(p-2q-3\gamma_p),
\end{align*}
in particular, since $p<q$ and $\gamma_p>0$, necessarily
\begin{equation}\label{ineq:gammaq-gammar}
\gamma_q+\gamma_r<\frac{p+q}{3}.
\end{equation}

However, the second equality \eqref{eq:pr-pq} implies that
\begin{align*}
\gamma_r
	&=\frac{(r-p)\frac{r+q+p}{3}+(p+q)\gamma_q-\gamma_q^2}{r-p+2\gamma_q}\\
	&=\frac{(r-p)\frac{q+p}{3}+(p+q)\gamma_q-\gamma_q^2}{r-p+2\gamma_q}+\frac{(r-p)\frac{r}{3}}{r-p+2\gamma_q}\\
	&=\frac{p+q}{3}+\frac{-2\gamma_q\frac{p+q}{3}+(p+q)\gamma_q-\gamma_q^2}{r-p+2\gamma_q}+\frac{(r-p)\frac{r}{3}}{r-p+2\gamma_q}\\
	&=\frac{p+q}{3}+\gamma_q\frac{\frac{p+q}{3}-\gamma_q}{r-p+2\gamma_q}+\frac{(r-p)r}{3(r-p+2\gamma_q)}.\\
\end{align*}
Since from \eqref{ineq:gammaq-gammar},
\[
\gamma_q<\frac{p+q}{3},
\]
we get from this latter equality for $\gamma_r$ that
\[
\gamma_r>\frac{p+q}{3},
\]
but this is a contradiction with \eqref{ineq:gammaq-gammar}.
\end{proof}

\begin{cor}
There are no $N$ gap traveling waves for $N\geq 3$.
\end{cor}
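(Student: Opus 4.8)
The plan is to turn condition~\eqref{condition} into the statement that finitely many explicit points lie on a common ellipse, and then to observe that three such points cannot coexist. Write the nonzero gaps as $k_1<\cdots<k_N$, with actions $\gamma_{k_m}=|\zeta_{k_m}(u_0)|^2>0$, set $A=\sum_{p}p\gamma_p$, and introduce the tail sums $\rho_m=\sum_{i>m}\gamma_{k_i}$ for $0\le m\le N$. Adopting the convention $k_0=0$ and $\omega_0^{(4)}=0$, condition~\eqref{condition} says exactly that the $N+1$ points $(k_m,\omega_{k_m}^{(4)}(u_0))$, $0\le m\le N$, lie on the line through the origin of slope $c$; equivalently, the consecutive slopes $(\omega_{k_{m+1}}^{(4)}-\omega_{k_m}^{(4)})/(k_{m+1}-k_m)$ all coincide.

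First I would compute these slopes from~\eqref{eq:omega_n^4}. Because no gap lies strictly between $k_m$ and $k_{m+1}$, the increments of the three sums in~\eqref{eq:omega_n^4} collapse: for a gap index $p$ one has $\min(p,k_{m+1})^2-\min(p,k_m)^2=0$ if $p\le k_m$ and $=k_{m+1}^2-k_m^2$ if $p\ge k_{m+1}$, while $\min(p,q,k_{m+1})-\min(p,q,k_m)$ equals $k_{m+1}-k_m$ when $\min(p,q)\ge k_{m+1}$ and $0$ when $\min(p,q)\le k_m$. After dividing by $k_{m+1}-k_m$ this yields
\[
\frac{\omega_{k_{m+1}}^{(4)}-\omega_{k_m}^{(4)}}{k_{m+1}-k_m}=A+F_m,\quad F_m:=k_m^2+k_mk_{m+1}+k_{m+1}^2+3\rho_m^2-3\rho_m(k_m+k_{m+1}).
\]
Thus $u_0$ is a traveling wave if and only if $F_0=F_1=\cdots=F_{N-1}$.

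The crucial algebraic point is the completion into a sum of squares,
\[
F_m=\frac14(k_{m+1}-k_m)^2+3\Big(\rho_m-\frac{k_m+k_{m+1}}2\Big)^2.
\]
Writing $a_m=\rho_m-\frac{k_m+k_{m+1}}2$ and $b_m=\frac12(k_{m+1}-k_m)>0$, the condition $F_0=\cdots=F_{N-1}=:L$ means that all the points $(a_m,b_m)$ lie on the ellipse $3a^2+b^2=L$ with $b>0$; in particular $|a_m|\le\sqrt{L/3}$. Positivity of the actions constrains their spacing: since $\rho_m-\rho_{m+1}=\gamma_{k_{m+1}}>0$,
\[
a_m-a_{m+1}=\gamma_{k_{m+1}}+b_m+b_{m+1}>b_m+b_{m+1}.
\]

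The contradiction for $N\ge3$ then appears at the first three points. From $a_0-a_1>b_0+b_1\ge b_1$ and $a_0\le\sqrt{L/3}$ I get $a_1<\sqrt{L/3}-b_1$, while from $a_1-a_2>b_1+b_2\ge b_1$ and $a_2\ge-\sqrt{L/3}$ I get $-a_1<\sqrt{L/3}-b_1$; hence $|a_1|+b_1<\sqrt{L/3}$. But every point of the ellipse with $b_1>0$ satisfies the opposite inequality $|a_1|+b_1>\sqrt{L/3}$: setting $u=|a_1|<\sqrt{L/3}$, the claim $\sqrt3\sqrt{L/3-u^2}>\sqrt{L/3}-u$ reduces after squaring to $(2u+\sqrt{L/3})(u-\sqrt{L/3})<0$, which holds. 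This contradiction rules out $N\ge3$. The main obstacle, and the reason a naive reduction to the previous proposition fails, is precisely that $\omega_n^{(4)}$ couples all the actions at once, so one cannot simply delete gaps; the telescoping of the discrete slopes is what decouples the problem into the per-gap quantities $F_m$ depending only on consecutive indices and the single tail sum $\rho_m$, after which the sum-of-squares form and the elementary ellipse estimate finish the argument uniformly in $N$. For $N=3$ this reproves the proposition; if one prefers to genuinely invoke it, one uses it for $N=3$ and runs the same estimate on $(a_1,b_1),(a_2,b_2),(a_3,b_3)$ when $N\ge4$.
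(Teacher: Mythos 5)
Your proof is correct, but it takes a genuinely different route from the paper's. The paper reduces the $N$ gap case to its three gap computation: it writes out the speeds $\omega_p^{(4)}/p$, $\omega_q^{(4)}/q$, $\omega_r^{(4)}/r$ for the three smallest gap indices $p<q<r$, lumps the entire tail into $\Gamma_r=\gamma_r+\sum_{k=4}^N\gamma_{r_k}$, and reruns the same pairwise subtractions to reach the incompatible bounds $\gamma_q+\Gamma_r<\frac{p+q}{3}$ and $\Gamma_r>\frac{p+q}{3}$. You instead recast condition \eqref{condition} as equality of all consecutive difference quotients of $n\mapsto\omega_n^{(4)}$ along the gap indices (origin included), compute that each quotient equals $A+F_m$ with $F_m$ depending only on $k_m$, $k_{m+1}$ and the tail sum $\rho_m$, and complete the square to place the points $(a_m,b_m)$ on the ellipse $3a^2+b^2=L$, which the spacing inequality $a_m-a_{m+1}>b_m+b_{m+1}$ (forced by $\gamma_{k_{m+1}}>0$) makes impossible for three points. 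I verified the key steps: the telescoping of the three sums in \eqref{eq:omega_n^4} between consecutive gaps (valid precisely because no gap index lies strictly between $k_m$ and $k_{m+1}$), the identity $F_m=\frac{1}{4}(k_{m+1}-k_m)^2+3\left(\rho_m-\frac{k_m+k_{m+1}}{2}\right)^2$, and the ellipse estimate $|a_1|+b_1>\sqrt{L/3}$, where the squaring is legitimate since both sides are positive when $|a_1|<\sqrt{L/3}$, which indeed holds because $b_1>0$; moreover your condition $F_0=F_1$ for $N=2$ reproduces \eqref{eq:twogap1}, a good consistency check. As for what each approach buys: the paper's is the shortest path once the three gap proposition is in hand (though one must check that the tail enters the algebra only through $\Gamma_r$, which is what its replacement argument asserts); yours is self-contained and uniform in $N\geq 3$, needs no reduction to a base case, and the intermediate quantities $F_m$ give a structural explanation of the whole classification — two gaps correspond to two points on the ellipse, which is possible, while three points are geometrically excluded.
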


\begin{proof}
The proof is the same as for the three gap traveling waves case, but with some additional terms which might hinder understanding for a first reading. We explain here how to adapt the proof.

Let $u_0$ be a $N$ gap potential, $N\geq 3$, and $p<q<r<r_4<\dots<r_N$ be the indices for the nonzero gaps. Let
\[
\Gamma_r:=\gamma_r+\sum_{k=4}^N\gamma_{r_k}.
\]
The speeds for the three smallest modes at indices $p,q$ and $r$ write
\begin{multline*}
\frac{\omega_p^{(4)}(u_0)}{p}
	= p^2+(p\gamma_p+q\gamma_q+r\gamma_r+\sum_{k=4}^Nr_k\gamma_{r_k})
	-3p(\gamma_p+\gamma_q+\Gamma_r)\\
	+3(\gamma_p^2+2\gamma_p(\gamma_q+\Gamma_r)+\gamma_q^2+2\gamma_q\Gamma_r+\gamma_r^2),
\end{multline*}
\begin{multline*}
\frac{\omega_q^{(4)}(u_0)}{q}
	= q^2+(p\gamma_p+q\gamma_q+r\gamma_r+\sum_{k=4}^Nr_k\gamma_{r_k})
	-3(\frac{p^2}{q}\gamma_p+q(\gamma_q+\Gamma_r))\\
	+3(\frac{p}{q}\gamma_p^2+2\frac{p}{q}\gamma_p(\gamma_q+\Gamma_r)+\gamma_q^2+2\gamma_q\Gamma_r+\Gamma_r^2)
\end{multline*}
and
\begin{multline*}
\frac{\omega_r^{(4)}(u_0)}{r}
	= r^2+(p\gamma_p+q\gamma_q+r\gamma_r+\sum_{k=4}^Nr_k\gamma_{r_k})
	-3(\frac{p^2}{r}\gamma_p+\frac{q^2}{r}\gamma_q+r\Gamma_r)\\
	+3(\frac{p}{r}\gamma_p^2+2\frac{p}{r}\gamma_p(\gamma_q+\Gamma_r)+\frac{q}{r}\gamma_q^2+2\frac{q}{r}\gamma_q\Gamma_r+\Gamma_r^2).
\end{multline*}
The rest of the proof is identical up to replacing $\gamma_r$ by $\Gamma_r$ everywhere from this point on.
\end{proof}

\subsection{Orbital stability}\label{subsection:stability}

\begin{prop}
The one gap traveling waves in $L^2_{r,0}(\T)$ are orbitally stable.
\end{prop}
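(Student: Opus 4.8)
The plan is to work entirely in Birkhoff coordinates, where the dynamics is explicit and the conservation of the actions $\gamma_n=|\zeta_n|^2$ does all the work. Recall from Proposition~\ref{prop:GWP} that for $v_0\in L^2_{r,0}(\T)$ the extended flow is given in coordinates by $\zeta_n(v(t))=\zeta_n(v_0)\e^{i\omega_n^{(4)}(v_0)t}$, so that $|\zeta_n(v(t))|=|\zeta_n(v_0)|$ for all $n\geq 1$ and all $t\in\R$; in particular the actions are preserved along the flow. I would also use that $\Phi\colon L^2_{r,0}(\T)\to h^{\half}_+$ is a homeomorphism.

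First I would identify the orbit of a one gap traveling wave $u_0$ whose nonzero gap is at index $p$, with $\gamma_p=|\zeta_p(u_0)|^2>0$. Using the translation identity $\zeta_n(u_0(\cdot+\theta))=\e^{in\theta}\zeta_n(u_0)$, the image $\Phi(\mathcal{O})$ of the orbit $\mathcal{O}=\{u_0(\cdot+\theta)\mid\theta\in\T\}$ is the circle $\{\eta\in h^{\half}_+\mid |\eta_p|=\sqrt{\gamma_p},\ \eta_n=0 \text{ for } n\neq p\}$. The crucial structural point, special to one gap potentials, is that this circle is exactly the set of all sequences in $h^{\half}_+$ sharing the same actions as $u_0$. (This is what fails for two gap potentials, where the independent phase rotation of the two modes genuinely moves the solution away from the translation orbit, and is the source of the instability in the next proposition.)

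Next I would estimate the distance to the orbit in $h^{\half}_+$. Since $\Phi(v(t))$ has the same actions as $v_0$ for every $t$, choosing the optimal phase in the $p$-th mode gives a distance that is independent of $t$:
\[
\inf_{\eta\in\Phi(\mathcal{O})}\|\Phi(v(t))-\eta\|_{h^{\half}_+}^2
	=p\big(|\zeta_p(v_0)|-\sqrt{\gamma_p}\big)^2+\sum_{n\neq p}n|\zeta_n(v_0)|^2
	\leq \|\Phi(v_0)-\Phi(u_0)\|_{h^{\half}_+}^2,
\]
where the last inequality uses $\big||\zeta_p(v_0)|-\sqrt{\gamma_p}\big|\leq|\zeta_p(v_0)-\zeta_p(u_0)|$. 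Thus the $h^{\half}_+$-distance of the whole trajectory to the orbit is controlled, uniformly in $t$, by the $h^{\half}_+$-distance of the initial data, which by continuity of $\Phi$ is small when $\|v_0-u_0\|_{L^2(\T)}$ is small.

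Finally I would transfer this bound back to $L^2_{r,0}(\T)$. The obstacle here is that $\Phi^{-1}$ is only continuous, not uniformly continuous, so I cannot simply push the uniform $h^{\half}_+$-bound through it. I would instead argue by contradiction: if stability failed, there would be $\varepsilon_0>0$, initial data $v_0^k\to u_0$ in $L^2_{r,0}(\T)$ and times $t_k$ such that the solutions $v_k=\mathcal{S}^{\cdot}(v_0^k)$ satisfy $\inf_\theta\|v_k(t_k)-u_0(\cdot+\theta)\|_{L^2(\T)}>\varepsilon_0$. By the previous step and $\Phi(v_0^k)\to\Phi(u_0)$ in $h^{\half}_+$, the distance of $\Phi(v_k(t_k))$ to the circle $\Phi(\mathcal{O})$ tends to $0$; picking nearest points $\eta^k\in\Phi(\mathcal{O})$ and using the compactness of this circle, I would extract a subsequence along which $\eta^k\to\eta^\infty\in\Phi(\mathcal{O})$ and hence $\Phi(v_k(t_k))\to\eta^\infty$ in $h^{\half}_+$. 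Continuity of $\Phi^{-1}$ then yields $v_k(t_k)\to\Phi^{-1}(\eta^\infty)\in\mathcal{O}$ in $L^2_{r,0}(\T)$, contradicting the lower bound $\varepsilon_0$. The compactness of the orbit circle, which replaces the missing uniform continuity of $\Phi^{-1}$, is the only delicate ingredient.
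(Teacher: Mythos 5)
Your proof is correct and follows essentially the same route as the paper: pass to Birkhoff coordinates where the extended flow is an explicit phase rotation preserving the actions, use compactness of a circle to extract a convergent subsequence, and transfer back via continuity of $\Phi^{-1}$ in a contradiction argument. The only cosmetic difference is that you extract the nearest point on the orbit circle $\Phi(\mathcal{O})$ (after noting the distance to it is conserved in time), whereas the paper equivalently extracts a convergent subsequence of the phases $\e^{i\omega_p^{(4)}(u_0^k)t_k}$ directly.
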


\begin{proof}
Let $u_0$ be a one gap traveling wave, $u_0^k$ a sequence of initial data converging to $u_0$ in $L^2_{r,0}(\T)$, and $t_k$ a sequence of times. We prove that up to some subsequence,
\[
\inf_{\theta\in\T}\|u_k(t_k)-u_0(\cdot+\theta)\|_{L^2_{r,0}(\T)}
	\longrightarroww{k\to+\infty}{} 0.
\]
It is enough to show that there exists $\theta\in\T$ such that in $L^2_{r,0}(\T)$,
\[
u_k(t_k)\longrightarroww{k\to+\infty}{} u_0(\cdot+\theta),
\]
i.e.\@ such that in $h^{\half}_+$,
\[
\Phi(u_k(t_k))\longrightarroww{k\to+\infty}{} \Phi(u_0(\cdot+\theta)).
\]

Recall that
\[
\Phi(u_k(t_k))=(\zeta_n(u_0^k)\e^{i\omega_n^{(4)}(u_0^k)t_k})_n.
\]
Let $p\geq1$ be the index for which $\zeta_p(u_0)\neq 0$. Up to some subsequence, there exists $\theta\in\T$ such that $\e^{i\omega_p^{(4)}(u_0^k)t_k}\longrightarroww{k\to+\infty}{} \e^{i\theta}$. Moreover, since $u_0^k$ converges to $u_0$ in $L^2_{r,0}(\T)$,
\[
p|\zeta_p(u_0^k)-\zeta_p(u_0)|^2+\sum_{n\neq p} n|\zeta_n(u_0^k)|^2
	\longrightarroww{k\to+\infty}{} 0.
\]
We deduce that
\begin{align*}
\|\Phi(u_k(t_k))- \Phi(u_0(\cdot+\theta))\|_{h^{\half}_+}^2
	&= p|\zeta_p(u_0^k)\e^{i\omega_n^{(4)}(u_0^k)t_k}-\zeta_p(u_0)\e^{i\theta}|^2+\sum_{n\neq p} n|\zeta_n(u_0^k)|^2\\
	&\longrightarroww{k\to+\infty}{} 0.
\end{align*}
\end{proof}

\begin{prop}
The two gap traveling waves in $L^2_{r,0}(\T)$ are orbitally unstable.
\end{prop}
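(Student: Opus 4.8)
The plan is to construct, for a fixed two gap traveling wave $u_0$ with nonzero gaps at indices $p<q$ and actions $\gamma_p,\gamma_q$, a sequence of initial data $(u_0^k)_k$ converging to $u_0$ in $L^2_{r,0}(\T)$ together with times $(t_k)_k$ such that $u_k(t_k)=\mathcal{S}^{t_k}(u_0^k)$ stays at positive distance from the orbit $\{u_0(\cdot+\theta)\mid\theta\in\T\}$. The perturbation keeps the two gaps at the same indices $p,q$ and with the same arguments as $u_0$, keeps $\gamma_p$ unchanged, and only moves $\gamma_q$ to $\gamma_q+\frac1k$. Via $\Phi^{-1}$ this defines a genuine two gap potential $u_0^k$, and $u_0^k\to u_0$ in $L^2_{r,0}(\T)$ because $\Phi(u_0^k)\to\Phi(u_0)$ in $h^{\half}_+$. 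The reason for perturbing $\gamma_q$ is that the traveling wave condition $\frac{\omega_p^{(4)}(u_0)}{p}=\frac{\omega_q^{(4)}(u_0)}{q}$ is, as the computation in the proof of the two gap classification shows, affine in $\gamma_q$ with coefficient $-3(q-p)\big(1+\frac{2\gamma_p}{q}\big)\ne0$; hence the quantity $\Delta_k:=q\,\omega_p^{(4)}(u_0^k)-p\,\omega_q^{(4)}(u_0^k)$, which vanishes for the traveling wave $u_0$, is nonzero for every $k$ and satisfies $\Delta_k\to0$ as $k\to\infty$.

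I would then choose $t_k:=\frac{\pi}{\Delta_k}$, so that $\Delta_k t_k=\pi$ while $|t_k|\to+\infty$. Since $u_0^k$ is a two gap potential, the only nonzero Birkhoff coordinates of $u_k(t_k)$ are $\zeta_n(u_k(t_k))=\zeta_n(u_0^k)\e^{i\omega_n^{(4)}(u_0^k)t_k}$ for $n\in\{p,q\}$, while the orbit is described by $\Phi(u_0(\cdot+\theta))=(\zeta_n(u_0)\e^{in\theta})_n$ thanks to the translation identity $\zeta_n(u(\cdot+\theta))=\zeta_n(u)\e^{in\theta}$ already used above.

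The core of the argument is to rule out, by a compactness and continuity argument, that $u_k(t_k)$ approaches the orbit. Suppose $\inf_{\theta}\|u_k(t_k)-u_0(\cdot+\theta)\|_{L^2(\T)}$ did not stay bounded away from $0$; then along a subsequence there would exist $\theta_k\in\T$ with $\|u_k(t_k)-u_0(\cdot+\theta_k)\|_{L^2(\T)}\to0$. By compactness of $\T$ we may assume $\theta_k\to\theta_*$, whence $u_0(\cdot+\theta_k)\to u_0(\cdot+\theta_*)$ and therefore $u_k(t_k)\to u_0(\cdot+\theta_*)$ in $L^2_{r,0}(\T)$. Applying the continuous Birkhoff map $\Phi$ then forces the component-wise convergences $\zeta_p(u_0^k)\e^{i\omega_p^{(4)}(u_0^k)t_k}\to\zeta_p(u_0)\e^{ip\theta_*}$ and $\zeta_q(u_0^k)\e^{i\omega_q^{(4)}(u_0^k)t_k}\to\zeta_q(u_0)\e^{iq\theta_*}$. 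Dividing by the nonzero limits of the moduli gives $\e^{i\omega_p^{(4)}(u_0^k)t_k}\to\e^{ip\theta_*}$ and $\e^{i\omega_q^{(4)}(u_0^k)t_k}\to\e^{iq\theta_*}$, and raising these to the powers $q$ and $p$ respectively and taking the quotient yields $\e^{i\Delta_k t_k}\to1$. This contradicts $\e^{i\Delta_k t_k}=\e^{i\pi}=-1$. Consequently $\inf_\theta\|u_k(t_k)-u_0(\cdot+\theta)\|_{L^2(\T)}\geq\varepsilon_0>0$ for $k$ large, which contradicts orbital stability.

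I expect the main obstacle to be twofold. First, the transversality check ensuring $\Delta_k\ne0$, i.e.\ that $\gamma_q\mapsto\frac{\omega_q^{(4)}}{q}-\frac{\omega_p^{(4)}}{p}$ has nonvanishing derivative along the chosen perturbation; this is where the explicit frequency formula \eqref{eq:omega_n^4} is used, and it is the reason for perturbing $\gamma_q$ rather than $\gamma_p$ (the latter derivative could vanish). Second, the passage from the $L^2$ orbital distance to the component-wise statement in $h^{\half}_+$, which must go through the forward continuity of $\Phi$ together with the compactness of the orbit, rather than through $\Phi^{-1}$, since a lower bound in $h^{\half}_+$ need not transfer directly to a lower bound in $L^2_{r,0}(\T)$.
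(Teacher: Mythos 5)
Your proof is correct and follows essentially the same strategy as the paper's: perturb only the action at the index $q$ (the paper takes a general sequence $\varepsilon_k\neq 0$ where you take $1/k$), use \eqref{eq:omega_n^4} to see that the difference of the two speeds becomes a nonzero multiple of the perturbation, choose $t_k$ so that the relative phase equals $\pi$, and reach a contradiction through the continuity of the Birkhoff map on $L^2_{r,0}(\T)$. The only cosmetic difference is in eliminating the translation parameter --- you pass to a convergent subsequence $\theta_k\to\theta_*$ in $\T$ and cancel it by raising the two phases to the powers $q$ and $p$, while the paper tracks integer argument sequences $n_{p,k}$, $n_{q,k}$ and forms the combination $pq\,t_k\bigl(\tfrac{\omega^{(4)}_q}{q}-\tfrac{\omega^{(4)}_p}{p}\bigr)+2\pi(pn_{q,k}-qn_{p,k})$ --- but the substance is identical.
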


\begin{proof}
Let $u_0$ be a two gap traveling wave such that the nonzero terms of the sequence $\Phi(u_0)=(\zeta_n(u_0))_{n\geq 1}$ are $\zeta_p(u_0)$ and $\zeta_q(u_0)$. We define the sequence $u_0^k$ of two gap initial data by their nonzero gaps at indices $p$ and $q$, denoted $\zeta_p(u_0^k)$ and $\zeta_q(u_0^k)$, as follows. We fix $\zeta_p(u_0^k):=\zeta_p(u_0)$ and choose any sequence of nonzero complex numbers $(\zeta_q(u_0^k))_k$ such that $\zeta_q(u_0^k)\longrightarroww{k\to+\infty}{}\zeta_q(u_0)$ but for all $k\in\N$, $\varepsilon_k:=|\zeta_q(u_0^k)|^2-|\zeta_q|^2\neq 0$. Then we construct $t_k\in\R$ in order to negate the orbital stability of $u_0$.

Assume by contradiction that
\[
\inf_{\theta\in\T}\|u_k(t_k)-u_0(\cdot+\theta)\|_{L^2_{r,0}(\T)}\longrightarroww{k\to+\infty}{}0.
\]
Then there exists a sequence $\theta_k\in\T$, $k\in\N$, such that in $L^2_{r,0}(\T)$,
\[
\|u_k(t_k,\cdot-\theta_k)-u_0\|_{L^2_{r,0}(\T)}\longrightarroww{k\to+\infty}{} 0.
\]
Applying the Birkhoff map, which is continuous on $L^2_{r,0}(\T)$,
\[
\zeta_p(u_0^k)\e^{i\omega^{(4)}_p(u_0^k)t_k-ip\theta_k}
	=\zeta_p(u_k(t_k))\e^{-ip\theta_k}
	\longrightarroww{k\to+\infty}{}\zeta_p(u_0)
\]
and
\[
\zeta_q(u_0^k)\e^{i\omega^{(4)}_q(u_0^k)t_k-iq\theta_k}
	=\zeta_q(u_k(t_k))\e^{-iq\theta_k}
	\longrightarroww{k\to+\infty}{}\zeta_q(u_0).
\]
This implies by taking the arguments that for some integers $n_{p,k}$ and $n_{q,k}$,
\[
\omega^{(4)}_p(u_0^k)t_k-p\theta_k+2\pi n_{p,k}\longrightarroww{k\to+\infty}{} 0
\]
and
\[
\omega^{(4)}_q(u_0^k)t_k-q\theta_k+2\pi n_{q,k}\longrightarroww{k\to+\infty}{} 0,
\]
therefore
\begin{equation}\label{eq:limit}
pqt_k\left(\frac{\omega^{(4)}_q(u_0^k)}{q}
-\frac{\omega^{(4)}_p(u_0^k)}{p}\right) +2\pi(pn_{q,k}-qn_{p,k})\longrightarroww{k\to+\infty}{} 0.
\end{equation}

However, writing $\varepsilon_k=|\zeta_q(u_0^k)|^2-|\zeta_q(u_0)|^2=\gamma_q(u_0^k)-\gamma_q(u_0)$, we get that the speeds of the two modes $p$ and $q$ for the initial data $u_0^k$ are given by
\[
\frac{\omega^{(4)}_p(u_0^k)}{p}
	=\frac{\omega^{(4)}_p(u_0)}{p}+q\varepsilon_k
	-3p\varepsilon_k+6\varepsilon_k(\gamma_p(u_0)+\gamma_q(u_0))+3\varepsilon_k^2
\]
and
\[
\frac{\omega^{(4)}_q(u_0^k)}{q}
	=\frac{\omega^{(4)}_q(u_0)}{q}+q\varepsilon_k
	-3q\varepsilon_k+6\varepsilon_k(\frac{p}{q}\gamma_p(u_0)+\gamma_q(u_0))+3\varepsilon_k^2.
\]
Since $u_0$ is a traveling wave, $\frac{\omega^{(4)}_p(u_0)}{p}=\frac{\omega^{(4)}_q(u_0)}{q}$ and therefore
\[
\frac{\omega^{(4)}_q(u_0^k)}{q}-\frac{\omega^{(4)}_p(u_0^k)}{p}
	=-3(q-p+2(1-\frac{p}{q})\gamma_p(u_0))\varepsilon_k.
\]
Since $\varepsilon_k\neq 0$, then $\frac{\omega^{(4)}_q(u_0^k)}{q}\neq \frac{\omega^{(4)}_p(u_0^k)}{p}$. It is therefore possible to choose a sequence $t_k$ such that the limit \eqref{eq:limit} does not hold and get a contradiction. For instance, we can choose $t_k$ such that
\[
-3pq t_k(q-p+2(1-\frac{p}{q})\gamma_p)\varepsilon_k =\pi.
\]
\end{proof}


\appendix
\section{Appendices}

\subsection{About the hierarchy}\label{part:hierarchy}

The aim of this Appendix is to provide a way to compute the Hamiltonians $\hamilton_k$ and frequencies $\omega_n^{(k)}$ for the higher order Benjamin-Ono equations in terms of the actions $\gamma_p=|\zeta_p(u_0)|^2$. In particular, we establish formula \eqref{eq:omega_n^4}
\begin{align*}
\omega_n^{(4)}(u_0)
	&=n^3+n\sum_{p\geq 1}p\gamma_p-3\sum_{p\geq1}\min(p,n)^2\gamma_p+3\sum_{p,q\geq 1}\min(p,q,n)\gamma_p\gamma_q
\end{align*}
for finite gap potentials $u_0$.


We need to recall first some notation.
Given $u\in L^2_{r,0}(\T)$ , we consider its Lax operator $L_u=-i\partial_x-T_u$ acting on $L^2_+(\T)$, with domain $H^1_+(\T)=H^1(\T)\cap L^2_+(\T)$. The spectrum of $L_u$ is discrete, with eigenvalues
\[
\lambda_0(u)<\lambda_1(u)<\cdots<\lambda_n(u)<\cdots.
\]
Moreover (see \cite{GerardKappeler2019}),
\[
\gamma_n(u)=\lambda_n(u)-\lambda_{n-1}(u)-1, \quad n\geq 1
\]
is non-negative and satisfies $\gamma_n(u)=|\zeta_n(u)|^2$. We also define $f_n(u)\in H^1_+(\T)$ as the $L^2$-normalized eigenfunction for $L_u$ associated to the eigenvalue $\lambda_n(u)$.

Let $u_0$ be a finite gap potential and use the above notation. From \cite{GerardKappeler2019} (3.8), a variant of the generating function, denoted by $\widetilde{\mathcal{H}_\varepsilon}$, is defined as
\[
\widetilde{\mathcal{H}_\varepsilon}=\sum_{n=0}^{+\infty} \frac{|\langle \un|f_n\rangle|^2}{1+\varepsilon\lambda_n}.
\]
From the decomposition (2.12) in \cite{GerardKappeler2019} :
\(
\Pi u=-\sum_{n=1}^{+\infty}\lambda_n\langle \un|f_n\rangle f_n,
\)
we know using formula \eqref{eq:Hk_Lax} that
\begin{align*}
\hamilton_k(u)
	&=\sum_{n=0}^{+\infty} |\langle \un|f_n\rangle|^2\lambda_n^k\\
	&=\frac{(-1)^k}{k!}\frac{\d^k}{\d\varepsilon^k}|_{\varepsilon=0}\widetilde{\mathcal{H}_\varepsilon}.
\end{align*}


We now make use of the generating function to derive a recurrence formula for the $\hamilton_k$. Set
\begin{align*}
g_\varepsilon
	:=-\frac{\d}{\d\varepsilon}\log\widetilde{\mathcal{H}_\varepsilon}
	=-\frac{1}{\widetilde{\mathcal{H}_\varepsilon}}\frac{\d}{\d\varepsilon}\widetilde{\mathcal{H}_\varepsilon},
\end{align*}
then from \cite{GerardKappeler2019} (3.11), $g_\varepsilon$ writes
\begin{align*}
g_\varepsilon
	=\frac{\lambda_0}{1+\varepsilon\lambda_0}+\sum_{n=1}^{+\infty}\frac{\gamma_n}{(1+\varepsilon(\lambda_{n-1}+1))(1+\varepsilon\lambda_n)}. 
\end{align*}
Using the identity
\begin{align*}
-\frac{\d^{k+1}}{\d\varepsilon^{k+1}}\widetilde{\mathcal{H}_\varepsilon}
	&=\frac{\d^k}{\d\varepsilon^k}\left(g_\varepsilon\widetilde{\mathcal{H}_\varepsilon}\right)\\
	&=\sum_{l=0}^k\binom{k}{l}\frac{\d^l}{\d\varepsilon^l}(g_\varepsilon)\frac{\d^{k-l}}{\d\varepsilon^{k-l}}(\widetilde{\mathcal{H}_\varepsilon}).
\end{align*}
and defining
\[
P_l:=\frac{(-1)^l}{l!}\frac{\d^l}{\d\varepsilon^l}|_{\varepsilon=0}(g_\varepsilon), \]
we get the recurrence relation
\begin{equation}\label{eq:fla_hk}
\hamilton_{k+1}=\frac{1}{k+1}\sum_{l=0}^kP_l\hamilton_{k-l}.
\end{equation}
Moreover, the frequencies $\omega_n^{(k)}=\frac{\partial \hamilton_k}{\partial \gamma_n}$, satisfy the recurrence formula
\begin{equation}\label{eq:fla_omegak}
\omega_n^{(k+1)}=\frac{1}{k+1}\sum_{l=0}^k\frac{\partial P_l}{\partial \gamma_n}\hamilton_{k-l}+P_l \omega_n^{(k-l)}.
\end{equation}

We now simplify $P_l$ and $\frac{\partial P_l}{\partial \gamma_n}$ :
\begin{align*}
P_l
	&=\lambda_0^{l+1}+\sum_{n\geq 1}\gamma_n\sum_{m=0}^l(\lambda_{n-1}+1)^m\lambda_n^{l-m},
\end{align*}
and since $\lambda_{n-1}+1=\lambda_n-\gamma_n$,
\begin{align*}
P_l
	&=\lambda_0^{l+1}+\sum_{n\geq 1}\lambda_n^{l+1}-(\lambda_n-\gamma_n)^{l+1}.
\end{align*}
From (3.13), $\lambda_n=n-s_{n+1}$ where $s_n=\sum_{k=n}^\infty\gamma_n$ for $n\geq 1$, therefore
\begin{align}\label{eq:pk}
P_l
	&=(-1)^{l+1}s_1^{l+1}+\sum_{n\geq 1}(n-s_{n+1})^{l+1}-(n-s_n)^{l+1}.
\end{align}
We deduce
\begin{align*}
\frac{1}{l+1}\frac{\partial P_l}{\partial \gamma_n}
	&=(-1)^{l+1}s_1^l+(n-s_n)^l-\sum_{p<n}(p-s_{p+1})^l-(p-s_p)^l\\
	&=\sum_{p=1}^n(p-s_p)^l-(p-1-s_p)^l\\
	&=n^l+\sum_{p=1}^n\sum_{m=1}^{l-1}\binom{l}{m}(p^m-(p-1)^m)(-1)^{l-m}s_p^{l-m}\\
	&=n^l+\sum_{m=1}^{l-1}\binom{l}{m}(-1)^{l-m}\sum_{\substack{p,p_1,\dots,p_{l-m}\\1\leq p\leq \min(n,p_1,\dots,p_{l-m})}}(p^m-(p-1)^m)\gamma_{p_1}\dots\gamma_{p_{l-m}},
\end{align*}
therefore
\begin{align*}
\frac{1}{l+1}\frac{\partial P_l}{\partial \gamma_n}
	&=n^l+\sum_{m=1}^{l-1}\binom{l}{m}(-1)^{l-m}\sum_{p_1,\dots,p_{l-m}}\min(n,p_1,\dots,p_{l-m})^m\gamma_{p_1}\dots\gamma_{p_{l-m}}.
\end{align*}


Let us compute the first small terms by using \eqref{eq:fla_hk} and \eqref{eq:fla_omegak}.
The Hamiltonian with index $0$ is constant
\(
\hamilton_0=1,
\)
leading to
\(
\omega_n^{(0)}=0,
\)
\(
P_0=0
\)
and
\(
\frac{\partial P_0}{\partial \gamma_n}=0.
\)

Concerning index $1$,
\(
\hamilton_1=-\langle u|\un\rangle=0
\)
because we assumed that $u\in L^2_{r,0}(\T)$ is of average zero. This leads to
\(
\omega_n^{(1)}=0,
\)
\(
P_1
	=2\sum_{p\geq 1}p\gamma_p
\)
and
\(
\frac{\partial P_1}{\partial \gamma_n}=2n.
\)
Because of cancellations for several of these small terms, the recurrence relations \eqref{eq:fla_hk} and \eqref{eq:fla_omegak} write, for $k\geq 2$,
\begin{equation}\label{eq:fla_hk2}
\hamilton_{k}=\frac{1}{k}\left(P_1\hamilton_{k-2}+P_2\hamilton_{k-3}+\cdots+P_{k-3}\hamilton_2+P_{k-1}\right)
\end{equation}
and
\begin{multline}\label{eq:fla_omegak2}
\omega^{(k)}_n=\frac{1}{k}\Big(\frac{\partial P_1}{\partial {\gamma_n}}\hamilton_{k-2}+\frac{\partial P_2}{\partial {\gamma_n}}\hamilton_{k-3}+\cdots+\frac{\partial P_{k-3}}{\partial {\gamma_n}}\hamilton_2\\
	+P_1\omega^{(k-2)}_n+P_2\omega^{(k-3)}_n+\cdots+P_{k-3}\omega^{(2)}_n	
	+\frac{\partial P_{k-1}}{\partial {\gamma_n}}\Big).
\end{multline}

The second index leads to the conservation of the mass
\[
\hamilton_2=\frac{\|u\|^2}{2}=\frac{P_1}{2},
\]
\[
\omega_n^{(2)}=n.
\]
Moreover,
\[
P_2
	=-s_1^3+\sum_{p=1}^\infty(p-s_{p+1})^3-(p-s_p)^3
	=3\sum_{p\geq1}p^2\gamma_p-3\sum_{p\geq1}s_p^2
\]
and
\[
\frac{\partial P_2}{\partial \gamma_n}
	=3(n^2-2\sum_{p=1}^\infty\min(p,n)\gamma_p).
\]

For the third index, we retrieve identity (8.6) from \cite{GerardKappeler2019} for the Hamiltonian
\[
\hamilton_3=\frac{P_2}{3}=\sum_{p\geq1}p^2\gamma_p-\sum_{p\geq1}s_p^2
\]
and formula (8.4) from \cite{GerardKappeler2019} for the frequencies
\[
\omega_n^{(3)}=n^2-2\sum_{p=1}^{+\infty}\min(p,n)\gamma_p.
\]

We now use that
\[
P_3=s_1^4+\sum_{p=1}^{+\infty}(p-s_{p+1})^4-(p-s_p)^4
\]
and
\begin{align*}
\frac{\partial P_3}{\partial \gamma_n}
	&=4(n^3-3\sum_{p}\min(p,n)^2\gamma_p+3\sum_{p,q}\min(p,q,n)\gamma_p\gamma_q)\\
\end{align*}
to get the formula for the Hamiltonian of index $4$
\[
\hamilton_4=\frac{1}{4}( P_3+ P_1\hamilton_2) 
	=\frac{1}{4} P_3+\half \hamilton_2^2
\]
and the frequencies
\begin{align*}
\omega_n^{(4)}
	&=\frac{1}{4}\frac{\partial P_3}{\partial \gamma_n}+n \hamilton_2\\
	&=n^3+n\sum_{p\geq 1}p\gamma_p-3\sum_{p\geq1}\min(p,n)^2\gamma_p+3\sum_{p,q\geq 1}\min(p,q,n)\gamma_p\gamma_q.
\end{align*}

In the same way, using that
\[
P_4=-s_1^5+\sum_{p=1}^{+\infty}(p-s_{p+1})^5-(p-s_p)^5
\]
and
\begin{align*}
\frac{1}{5}\frac{\partial P_4}{\partial \gamma_n}	
	&=n^4-4\sum_{p\geq1}\min(p,n)^3\gamma_p+6\sum_{p,q\geq1}\min(p,q,n)^2\gamma_p\gamma_q-4\sum_{p,q,r\geq 1}\min(p,q,r,n)\gamma_p\gamma_q\gamma_r,
\end{align*}
we can get a formula for the Hamiltonian with index $5$
\[
\hamilton_5=\frac{1}{5}(P_4+P_2\hamilton_2+P_1\hamilton_3)
\]
and the frequencies
\begin{align*}
\omega_n^{(5)}
	&=\frac{1}{5}(\frac{\partial P_1}{\partial \gamma_n}\hamilton_3+P_1\omega_n^{(3)}+\frac{\partial P_2}{\partial \gamma_n}\hamilton_2+P_2\omega_n^{(2)}+\frac{\partial P_4}{\partial \gamma_n})\\
	&=\frac{2}{5}n(\sum_{p\geq 1}p^2\gamma_p-\sum_{p\geq1} s_p^2)+\frac{2}{5}(\sum_{p\geq 1}p\gamma_p)(n^2-2\sum_{p\geq 1}\min(p,n)\gamma_p)\\
	&+\frac{3}{5}(n^2-2\sum_{p\geq 1}\min(p,n)\gamma_p)(\sum_{p\geq 1}p\gamma_p)
	+\frac{3}{5}(\sum_{p\geq 1}p^2\gamma_p-\sum_{p\geq1} s_p^2)n
	+\frac{1}{5}\frac{\partial P_4}{\partial \gamma_n},
\end{align*}
leading to
\begin{multline}\label{eq:omega_n^5}
\omega_n^{(5)}
	=n(\sum_{p\geq 1}p^2\gamma_p-\sum_{p\geq1} s_p^2)+(\sum_{p\geq 1}p\gamma_p)(n^2-2\sum_{p\geq 1}\min(p,n)\gamma_p)\\
	+n^4-4\sum_{p\geq1}\min(p,n)^3\gamma_p+6\sum_{p,q\geq1}\min(p,q,n)^2\gamma_p\gamma_q-4\sum_{p,q,r\geq 1}\min(p,q,r,n)\gamma_p\gamma_q\gamma_r.
\end{multline}

\begin{rk}\label{rk:WP}
Note that formulas \eqref{eq:omega_n^4} and \eqref{eq:omega_n^5} for $\omega_n^{(4)}(u_0)$ and $\omega_n^{(5)}(u_0)$, which have been established for finite gap potentials $u_0$, still make sense for $\omega_n^{(4)}(u_0)$ if $u_0\in L^2_{r,0}(\T)$ and for $\omega_n^{(5)}(u_0)$ if $u_0\in H^{\half}_{r,0}(\T)$, but diverge if $u_0\in H^s_{r,0}(\T)$ for $s$ right below these respective exponents ($s<0$ and $s<\half$).

One can actually show by induction the following facts. In the formula \eqref{eq:pk} for $P_k$, there is one term $c_1\sum_{p=1}^{+\infty}p^k\gamma_p$, the other terms being convergent if $\sum_{p=1}^{+\infty}p^{k-1}\gamma_p<+\infty$. This implies that in the formula \eqref{eq:fla_hk2} for $\hamilton_k$ appears one term $c_2\sum_{p=1}^{+\infty}p^{k-1}\gamma_p$, the other terms being convergent if $\sum_{p=1}^{+\infty}p^{k-2}\gamma_p<+\infty$. Consequently, in formula \eqref{eq:fla_omegak2} for $\omega^{(k)}_n$, $k\geq 4$, appears one term $c_3\sum_{p=1}^{+\infty}p^{k-3}\gamma_p$, the other terms being convergent if $\sum_{p=1}^{+\infty}p^{k-4}\gamma_p<+\infty$.

From these facts, one can see that the formula for $\omega^{(k)}_n$ can be extended by continuity to potentials in $H^{s_k}_{r,0}(\T)$ where $s_k=\frac{k}{2}-2$, however there is no continuous extension to $H^{s_k}_{r,0}(\T)$ when $s_k-\half<s<s_k$. This explains why the well-posedness threshold for the equation associated to the Hamiltonian $\hamilton_k$ in the hierarchy should be $H^{s_k}_{r,0}(\T)$.
\end{rk}

\subsection{Equation for the fourth Hamiltonian}\label{part:appendix}

From formula \eqref{eq:Hk_Lax} and the decomposition (2.12) in \cite{GerardKappeler2019} :
\(
\Pi u=-\sum_{n=1}^{+\infty}\lambda_n\langle \un|f_n\rangle f_n,
\) we see that for $k\geq 2$,
\begin{equation}\label{eq:Hk_Lax2}
\hamilton_k(u)=\langle L_u^{k-2}\Pi u|\Pi u\rangle,
\end{equation}
where
\[
L_u(h)=Dh-\Pi(uh), \quad D=-i\partial_x, \quad h\in H^1_+(\T).
\]

For instance,
\[
\hamilton_3(u)=\frac{1}{2\pi}\int_0^{2\pi}\half u\hilbert\partial_xu-\frac{1}{3}u^3\d x
\]
leads to the Benjamin-Ono equation
\[
\partial_t u=\hilbert \partial_x^2u-\partial_x(u^2).
\]




\begin{prop}
The Hamiltonian for the third order equation of the Benjamin-Ono hierarchy~\eqref{eq:bo4} is
\[
\hamilton_4(u)=\intt \left( \half (\partial_x u)^2-\frac{3}{4}u^2\hilbert\partial_xu+\frac{1}{4}u^4\right)-\frac{1}{8}\|u\|_{L^2(\T)}^4,
\]
therefore the third order equation of the Benjamin-Ono hierarchy writes
\[
\partial_tu=\partial_x(-\partial_{xx}u-\frac{3}{2}u\hilbert\partial_x u-\frac{3}{2}\hilbert(u\partial_x u)+u^3)
\]
\end{prop}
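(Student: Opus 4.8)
The plan is to read $\hamilton_4$ off its Lax expression and reduce everything to Fourier analysis on the Hardy space. Starting from \eqref{eq:Hk_Lax2} with $k=4$ and the self-adjointness of $L_u=D-T_u$ on $L^2_+(\T)$, which holds because $u$ is real so that $T_u=T_u^*$, I would first write
\[
\hamilton_4(u)=\langle L_u^2\Pi u|\Pi u\rangle=\langle L_u\Pi u|L_u\Pi u\rangle=\|L_u\Pi u\|_{L^2}^2 .
\]
Setting $w:=\Pi u$ and using that $u$ is real with zero mean, so that $u=w+\overline{w}$ and $uw=w^2+|w|^2$ with $w^2\in L^2_+(\T)$, this yields the explicit expression
\[
L_u\Pi u=Dw-\Pi(uw)=Dw-w^2-\Pi(|w|^2).
\]

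The core of the argument is to expand $\|Dw-w^2-\Pi(|w|^2)\|_{L^2}^2$ and to match it, degree by degree in $u$, with the claimed formula. The quadratic part is immediate, since $\|Dw\|_{L^2}^2=\sum_{n\ge 1}n^2|\widehat u(n)|^2=\intt\half(\partial_x u)^2$. For the cubic and quartic parts I would eliminate the Szegő projector using that it is the \emph{orthogonal} projector onto $L^2_+(\T)$, so that it may be dropped against any factor already lying in the Hardy space; for instance $\langle Dw|\Pi(uw)\rangle=\langle Dw|uw\rangle$. Substituting $u=w+\overline w$ and using the identity $\hilbert\partial_x=|D|$, so that $\hilbert\partial_x u=Dw+\overline{Dw}$, each inner product collapses, after collecting terms of vanishing total frequency, into a physical-space integral: the cubic part becomes $\intt(-\frac34 u^2\hilbert\partial_x u)$ and the leading quartic part $\intt\frac14 u^4$. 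The additional non-local term arises entirely from the holomorphic truncation in $\|\Pi(uw)\|_{L^2}^2$; I would isolate it through $\|\Pi(uw)\|_{L^2}^2=\|uw\|_{L^2}^2-\|(\mathrm{Id}-\Pi)(|w|^2)\|_{L^2}^2$, evaluate the truncated summand using the reality of $|w|^2=|\Pi u|^2$, and combine it with the contribution of $\|uw\|_{L^2}^2$ to reach the stated multiple of $\|u\|_{L^2(\T)}^4$.

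Once the formula for $\hamilton_4$ is established, the evolution equation follows by differentiating the \emph{local} functional. Indeed, with $\hamilton_2=\half\|u\|_{L^2(\T)}^2$, the identity is equivalent to $\hamilton_4+\half\hamilton_2^2=\intt\big(\half(\partial_x u)^2-\frac34 u^2\hilbert\partial_x u+\frac14 u^4\big)$, whose $L^2$-gradient I would compute by integration by parts and the self-adjointness of $\hilbert\partial_x$, together with $\hilbert\partial_x(u^2)=2\hilbert(u\partial_x u)$; this gives $-\partial_{xx}u-\frac32 u\hilbert\partial_x u-\frac32\hilbert(u\partial_x u)+u^3$, and applying $\partial_x$ produces \eqref{eq:bo4}, the mass term $\half\hamilton_2^2$ contributing only a translation at the conserved speed $\hamilton_2$. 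The step I expect to be the genuine obstacle is the projector bookkeeping in the cubic and quartic inner products: tracking exactly which frequencies survive each holomorphic truncation, and the combinatorial multiplicities in the resulting Fourier sums, so that the separate pieces recombine with the precise coefficients $-\frac34$ and $\frac14$ and the correct multiple of $\|u\|_{L^2(\T)}^4$. The quadratic term and the gradient computation are routine; the heart of the matter is verifying that all projector-induced corrections collapse to a single clean multiple of $\|u\|_{L^2(\T)}^4$.
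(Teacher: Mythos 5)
Your strategy coincides with the paper's own proof: both write $\hamilton_4(u)=\langle L_u^2\Pi u|\Pi u\rangle=\|L_u\Pi u\|_{L^2}^2$ by self-adjointness of $L_u$, expand into quadratic, cubic and quartic contributions, and clear the Szeg\H{o} projections using that $u$ is real with zero mean. Your quadratic term, your cubic term (which does equal $-\frac{3}{4}\intt u^2\hilbert\partial_xu\d x$), your Pythagoras identity $\|\Pi(u\Pi u)\|_{L^2}^2=\|u\Pi u\|_{L^2}^2-\|(\id-\Pi)(|\Pi u|^2)\|_{L^2}^2$, and your final $L^2$-gradient computation are all correct. The step that fails is precisely the one you left schematic, namely that the combination "reaches the stated multiple of $\|u\|_{L^2(\T)}^4$". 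It does not: writing $w=\Pi u$, reality and zero mean of $u$ give
\[
\|u w\|_{L^2}^2-\frac{1}{4}\intt u^4\d x=\half\intt |w|^4\d x,
\qquad
\|(\id-\Pi)(|w|^2)\|_{L^2}^2=\half\intt |w|^4\d x-\half\|w\|_{L^2}^4,
\]
so your decomposition yields
\[
\|\Pi(u\Pi u)\|_{L^2}^2=\frac{1}{4}\intt u^4\d x+\half\|\Pi u\|_{L^2}^4
	=\frac{1}{4}\intt u^4\d x+\frac{1}{8}\|u\|_{L^2}^4,
\]
with a \emph{plus} sign, hence
\(
\hamilton_4(u)=\intt\left(\half(\partial_xu)^2-\frac{3}{4}u^2\hilbert\partial_xu+\frac{1}{4}u^4\right)\d x+\frac{1}{8}\|u\|_{L^2(\T)}^4,
\)
the opposite of the stated correction term. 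A direct check confirms this: for $u=2\cos x$ one has $\Pi u=\e^{ix}$ and $L_u\Pi u=\e^{ix}-\e^{2ix}-1$, so $\hamilton_4(u)=\|L_u\Pi u\|_{L^2}^2=3$, whereas the three integrals in the Proposition equal $1$, $0$ and $\frac{3}{2}$, and $\frac{1}{8}\|u\|_{L^2}^4=\half$; the stated formula gives $2\neq 3$, while the corrected sign gives $3$.

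So, executed faithfully, your plan refutes the Proposition rather than proving it; the assertion that your bookkeeping lands on the stated sign is the genuine gap. The root cause, however, is not your method but the statement itself: the paper's proof quotes the correct identity $\|f\|^2+|\langle f|\un\rangle|^2=2\|\Pi f\|^2$ for real $f$, but then applies it with the sign flipped, as $\intt|\Pi(\overline{\Pi u}\Pi u)|^2\d x=\half\intt|\overline{\Pi u}\Pi u|^2\d x-\half|\langle\overline{\Pi u}\Pi u|\un\rangle|^2$, and this single flip is what produces $-\frac{1}{8}\|u\|_{L^2}^4$. Your derivation of the PDE is unaffected: the $\partial_x\nabla$-flow of the local functional $\intt(\half(\partial_xu)^2-\frac{3}{4}u^2\hilbert\partial_xu+\frac{1}{4}u^4)\d x$ is indeed \eqref{eq:bo4}. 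But with the corrected sign this local functional equals $\hamilton_4-\half\hamilton_2^2$ rather than $\hamilton_4+\half\hamilton_2^2$, so the identification of the Hamiltonian generating \eqref{eq:bo4} claimed in the Proposition (and in \eqref{eq:hamilton4}) has to be adjusted accordingly; you should flag this rather than force your computation to match the stated sign.
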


\begin{proof}
Let $u_0\in L^2_{r,0}(\T)$. We develop
\begin{align*}
\hamilton_4(u)
	=\|D\Pi u\|_{L^2(\T)}^2-2\Re\langle D\Pi u|\Pi(u\Pi u))\rangle+\|\Pi(u\Pi u)\|_{L^2(\T)}^2,
\end{align*}
and study each term separately.

First, since $u$ is real, $\widehat{u}(-n)=\overline{\widehat{u}(n)}$, therefore
\begin{align*}
\|D\Pi u\|_{L^2(\T)}^2
	=\sum_{n\geq 0}|n|^2|\widehat{u}(n)|^2
	=\half\sum_{n\in\Z}|n|^2|\widehat{u}(n)|^2
	=\half\|\partial_x u\|_{L^2(\T)}^2.
\end{align*}

Then, $u$ being with average zero, $u=\Pi u+\overline{\Pi u}$, leading to
\begin{align*}
\langle D\Pi u|\Pi(u\Pi u))\rangle
	&=\intt D(\Pi u) u \overline{\Pi u}\d x\\
	&=\intt D( u) u \overline{\Pi u}\d x-\intt D(\overline{\Pi u}) u \overline{\Pi u}\d x
\end{align*}
so that
\begin{align*}
2\langle D\Pi u|\Pi(u\Pi u))\rangle
	&=\intt D( u^2)\overline{\Pi u}\d x-\intt D(\overline{\Pi u}^2) u\d x\\
	&=-\intt  u^2D(\overline{\Pi u})\d x+\intt \overline{\Pi u}^2 Du\d x.
\end{align*}
Taking the real part,
\begin{align*}
4\Re\langle D\Pi u|\Pi(u\Pi u))\rangle
	&=2\left(\langle D\Pi u|\Pi(u\Pi u))\rangle+\overline{\langle D\Pi u|\Pi(u\Pi u))\rangle}\right)\\
	&=-\intt  u^2(D(\overline{\Pi u})+\overline{D\overline{\Pi u}})\d x+\intt \overline{\Pi u}^2 Du+(\Pi u)^2\overline{Du}\d x.
\end{align*}
Using that $\overline{Df}=-D\overline{f}$,
\begin{align*}
4\Re\langle D\Pi u|\Pi(u\Pi u))\rangle
	&=-\intt  u^2(D(\overline{\Pi u})-D(\Pi u))\d x+\intt (\overline{\Pi u}^2 -(\Pi u)^2)Du\d x\\
	&=-\intt  u^2(D(\overline{\Pi u})-D(\Pi u))\d x+\intt (\overline{\Pi u} -\Pi u)uDu\d x\\
	&=-\intt  u^2(D(\overline{\Pi u})-D(\Pi u))\d x-\half \intt D(\overline{\Pi u} -\Pi u)u^2\d x\\
	&=-\frac{3}{2}\intt  u^2(D(\overline{\Pi u})-D(\Pi u))\d x.
\end{align*}
It now remains to remark that $D(\overline{\Pi u})-D(\Pi u)=-H\partial_xu$ in order to conclude the identity
\[
2\Re\langle D\Pi u|\Pi(u\Pi u))\rangle
	=\frac{3}{4}\langle H\partial_xu|u^2\rangle.
\]

Finally, we treat the last term $\|\Pi(u\Pi u)\|_{L^2(\T)}^2$. Note that by decomposing $u=\Pi u+\overline{\Pi u}$,
\[
\Pi(u\Pi u)=(\Pi u)^2+\Pi(\overline{\Pi u}\Pi u),
\]
therefore
\begin{align*}
|\Pi(u\Pi u)|^2
	&=|\Pi u|^4+(\Pi u)^2\overline{\Pi(\overline{\Pi u}\Pi u)}+\overline{\Pi u}^2\Pi(\overline{\Pi u}\Pi u)+|\Pi(\overline{\Pi u}\Pi u)|^2.
\end{align*}
By removing the useless projections,
\begin{align*}
\|\Pi(u\Pi u)\|_{L^2(\T)}^2
	&=\intt |\Pi u|^4+(\Pi u)^2\overline{\Pi u}\Pi u+\overline{\Pi u}^2\overline{\Pi u}\Pi u+|\Pi(\overline{\Pi u}\Pi u)|^2\d x\\
	&=\intt (\Pi u)^2\overline{\Pi u}^2+(\Pi u)^3\overline{\Pi u}+\overline{\Pi u}^3\Pi u+|\Pi(\overline{\Pi u}\Pi u)|^2\d x.
\end{align*}
But if we take the fourth power of the identity $u=\Pi u+\overline{\Pi u}$
\[
u^4=(\Pi u)^4+\overline{\Pi u}^4+4(\Pi u)^3\overline{\Pi u}+4\overline{\Pi u}^3\Pi u+6\overline{\Pi u}^2(\Pi u)^2,
\]
and make use of the fact that the mean of $u$ is zero, we get
\[
\|u\|_{L^4(\T)}^4
	=\intt 4(\Pi u)^3\overline{\Pi u}+4\overline{\Pi u}^3\Pi u+6(\Pi u)^2\overline{\Pi u}^2\d x.
\]
By subtraction, the following cancellations happen :
\begin{align*}
\|\Pi(u\Pi u)\|_{L^2(\T)}^2-\frac{1}{4}\|u\|_{L^4(\T)}^4
	&=\intt |\Pi(\overline{\Pi u}\Pi u)|^2\d x-\half\intt (\Pi u)^2\overline{\Pi u}^2\d x.	
\end{align*}
To conclude, since $\overline{\Pi u}\Pi u$ is real, we can use the identity $\|f\|^2+|\langle f |\un\rangle|^2=2\|\Pi f\|^2$ for real valued functions $f\in L^2(\T)$ to get
\begin{align*}
\intt |\Pi(\overline{\Pi u}\Pi u)|^2\d x
	=\half\intt |\overline{\Pi u}\Pi u|^2\d x-\half |\langle \overline{\Pi u}\Pi u,\un\rangle|^2,
\end{align*}
leading to
\begin{align*}
\|\Pi(u\Pi u)\|_{L^2(\T)}^2-\frac{1}{4}\|u\|_{L^4(\T)}^4
	&=-\half |\langle \overline{\Pi u}\Pi u,\un\rangle|^2\\
	&=-\half\|\Pi u\|_{L^2(\T)}^4\\
	&=-\frac{1}{8}\|u\|_{L^2(\T)}^4.
\end{align*}

\end{proof}

\subsection{Structure of the higher order Hamiltonians}\label{part:appendix_structure}

The aim of this Appendix is to give an alternative proof of Proposition 2.2 in \cite{TzvetkovVisciglia2014}.

We first recall the notation introduced in \cite{TzvetkovVisciglia2014} for the sake of completeness. For a smooth function $u\in\classeC^{\infty}(\T)$, define by induction the sets $\mathcal{P}_n(u)$ as
\[
\mathcal{P}_1(u)
	=\{H^{\varepsilon_1}\partial_x^{\alpha_1}u
	\mid \varepsilon_1\in\{0,1\},\quad \alpha_1\in\N\},
\]
\[
\mathcal{P}_2(u)
	=\{(H^{\varepsilon_1}\partial_x^{\alpha_1}u)(H^{\varepsilon_2}\partial_x^{\alpha_2}u)
	\mid \varepsilon_1,\varepsilon_2\in\{0,1\},\quad \alpha_1,\alpha_2\in\N\}
\]
and for $n\geq 2$,
\[
\mathcal{P}_n(u)
	=\left\{\prod_{l=1}^kH^{\varepsilon_l}p_{j_l}(u)
	\mid
	k\in\li 2,n\ri,
	\quad \varepsilon_1,\dots, \varepsilon_k\in\{0,1\},
	\quad \sum_{l=1}^kj_l=n,
	\quad p_{j_l}(u)\in\mathcal{P}_{j_l}(u)\right\}.
\]
Moreover, for $p_n(u)\in\mathcal{P}_n(u)$, the term $\widetilde{p_n}(u)$ is uniquely defined from $p_n(u)$ by removing all the symbols $H$ in the expression of $p_n(u)$ and only keeping the symbols $\partial_x^{\alpha_i}u$. In this case, if
\[
\widetilde{p_n}(u)=\prod_{i=1}^n\partial_x^{\alpha_i}u,
\]
the maximal order of derivative involved and the sum of these orders are respectively denoted
\[
|p_n(u)|=\sup_{i\in\li 1,n\ri}\alpha_i
\]
and
\[
\|p_n(u)\|=\sum_{i=1}^n\alpha_i.
\]

We now retrieve a proof of the following result (Proposition 2.2 in \cite{TzvetkovVisciglia2014}).
\begin{prop}
Let $k=2(m+1)$ be an even integer. Then there exists $c\in\R$ such that the $k$-th Hamiltonian $\hamilton_k$ writes, for all $u\in\classeC^{\infty}(\T)$,
\[
\hamilton_{k+2}(u)
	=\half\|u\|_{\dot{H}^{m+1}(\T)}^2
	+c\int_0^{2\pi} u(H\partial_x^mu)(\partial_x^{m+1}u)\d x
	+R,
\]
where for some real numbers $c(p)$,
\[
R=\sum_{j=3}^{2m+4}\sum_{\substack{p(u)\in\mathcal{P}_j(u)\\\|p(u)\|=2m+4-j\\|p(u)|\leq m}}c(p)\int_0^{2\pi}p(u)\d x.
\]
\end{prop}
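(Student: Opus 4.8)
The plan is to start from the Lax representation \eqref{eq:Hk_Lax2}, which for $k+2=2m+4$ reads $\hamilton_{k+2}(u)=\langle L_u^{2m+2}\Pi u\mid\Pi u\rangle$ with $L_u h=Dh-\Pi(uh)$, $D=-i\partial_x$. First I would expand $L_u^{2m+2}=(D-T_u)^{2m+2}$, where $T_uh=\Pi(uh)$, and group the resulting terms according to the number $r$ of factors $T_u$ they contain. Since each $T_u$ inserts one factor of $u$ while the two copies of $\Pi u$ in the inner product contribute two more, a term with $r$ copies of $T_u$ is homogeneous of degree $j=r+2$ in $u$; the remaining $2m+2-r$ factors are copies of $D$, so it carries exactly $2m+2-r$ derivatives. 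Integration by parts over $\T$ preserves the total number of derivatives, hence rewriting every term as a finite sum of integrals $\int_0^{2\pi}p(u)\d x$ with $p(u)\in\mathcal{P}_j(u)$ forces $\|p(u)\|=2m+2-r=2m+4-j$. This is precisely the homogeneity constraint $\|p(u)\|+j=2m+4$ demanded in the statement, and it holds automatically from the operator structure.

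It then remains to treat the three regimes separately. For $r=0$ (the term $\langle D^{2m+2}\Pi u\mid\Pi u\rangle$) a direct Fourier computation, using that $u$ is real and that $k=2m+2$ is even, gives $\sum_{n\geq1}n^{2m+2}|\widehat u(n)|^2=\half\sum_{n\in\Z}n^{2m+2}|\widehat u(n)|^2=\half\|u\|_{\dot{H}^{m+1}(\T)}^2$, producing the first explicit term. For $r\geq2$, hence $j\geq4$ and $\|p(u)\|=2m+4-j\leq2m$, I would prove an integration-by-parts balancing lemma: any $\int_0^{2\pi}\prod_{i=1}^j H^{\varepsilon_i}\partial_x^{\alpha_i}u\,\d x$ with $j\geq4$ and $\sum_i\alpha_i\leq2m$ can be rewritten as a finite sum of integrals of the same type with maximal order $\leq m$. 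The mechanism is that $\partial_x$ commutes with $H$ and $H^*=-H$, so each $H^{\varepsilon_i}$ is inert under integration by parts; if some factor has order $>m$, the remaining $\geq3$ factors carry a total of $\leq m-1$ derivatives, so each of them has order $\leq m-1$ and can absorb a derivative moved off the top factor, and iterating lowers the maximum to $m$. This places all $r\geq2$ contributions inside $R$.

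The genuinely delicate regime is $r=1$ (the cubic term, $j=3$), which I expect to be the main obstacle. Writing it out as $-\sum_{l=0}^{2m+1}\langle u\,D^l\Pi u\mid D^{2m+1-l}\Pi u\rangle$, the three factors have orders $(0,l,2m+1-l)$ with total $2m+1$, so the balanced configurations concentrate at $(0,m,m+1)$. Integration by parts reduces all contributions to maximal order $m$ except for one irreducible term: moving a derivative off $\partial_x^{m+1}u$ either lands it on the order-zero factor (an admissible term) or onto the Hilbert-transformed factor, producing $\int_0^{2\pi}u(H\partial_x^{m+1}u)(\partial_x^m u)\d x$, whose own integration by parts returns to the original term up to admissible pieces. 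This loop does not occur for $j\geq4$, where one always has a genuinely low-order factor to absorb the extra derivative permanently, and it is precisely why a single top-order term survives; note that the variants without $H$ or with two $H$'s do reduce, since $(\partial_x^m u)(\partial_x^{m+1}u)=\half\partial_x[(\partial_x^m u)^2]$ and likewise $(H\partial_x^m u)(H\partial_x^{m+1}u)=\half\partial_x[(H\partial_x^m u)^2]$ are exact derivatives.

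The remaining work in the cubic regime is the careful bookkeeping, via the decomposition $u=\Pi u+\overline{\Pi u}$ and the identities relating $H$ to $\Pi$ on $L^2_+(\T)$, that identifies the one surviving term as $c\int_0^{2\pi}u(H\partial_x^m u)(\partial_x^{m+1}u)\d x$ with a single explicit constant $c$, every other cubic and higher contribution being placed in $R$ with $|p(u)|\leq m$, $3\leq j\leq2m+4$, and $\|p(u)\|=2m+4-j$. Tracking this constant through the telescoping integrations by parts is the only computation I expect to require genuine care.
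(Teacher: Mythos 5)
Your overall route is the same as the paper's---expand the Lax power in $T_u$, get the quadratic term by Plancherel, reduce everything of degree at least four by integration by parts, and treat the cubic term separately---except that you do not symmetrize: the paper writes $\hamilton_{k+2}(u)=\langle L_u^{m+1}\Pi u\,|\,L_u^{m+1}\Pi u\rangle$, so each side carries at most $m+1$ derivatives and the degree $\geq 4$ terms come out essentially balanced, while you expand $\langle L_u^{2m+2}\Pi u\,|\,\Pi u\rangle$ and compensate with your balancing lemma. That lemma is correct (when the total order is $\leq 2m$, the factor of order $\geq m+1$ is unique and each integration by parts strictly lowers the maximum), so this variant is harmless. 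The genuine gap is in the cubic regime, exactly where you locate the difficulty and then assert the answer. Integration by parts does \emph{not} leave ``one irreducible term'': modulo admissible remainders ($j=3$, $\|p(u)\|=2m+1$, $|p(u)|\leq m$) there are \emph{two} linearly independent survivors, namely $\int_0^{2\pi}u\,(H\partial_x^mu)(\partial_x^{m+1}u)\d x$ and $\int_0^{2\pi}(Hu)(H\partial_x^mu)(\partial_x^{m+1}u)\d x$. Your exact-derivative remark kills the patterns where the two top-order factors carry the same number of Hilbert transforms, and your loop identifies $u(H\partial_x^m u)(\partial_x^{m+1}u)$ with $u(\partial_x^mu)(H\partial_x^{m+1}u)$ up to sign and admissible terms, but nothing in your argument controls whether the order-zero factor is $u$ or $Hu$---and such factors do occur, e.g.\ in the $l=0$ term $\langle u\,\Pi u\,|\,D^{2m+1}\Pi u\rangle$, where the complex factor $\Pi u=\frac{u+iHu}{2}$ sits at order zero. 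So your plan only yields $\hamilton_{k+2}=\half\|u\|_{\dot H^{m+1}}^2+c_1\int u(H\partial_x^mu)(\partial_x^{m+1}u)+c_2\int(Hu)(H\partial_x^mu)(\partial_x^{m+1}u)+R$, which is strictly weaker than the statement. The weakening is real: for $m=0$ the remainder class contains no cubic term at all ($\|p\|=1$ with $|p|\leq 0$ is empty), and the two integrals are independent functionals (for $u=\cos x+\cos 2x$ they equal $-\pi$ and $0$; for $u=\cos x-\sin 2x$ they equal $0$ and $2\pi$), so $c_2=0$ must be proved, not absorbed.

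The missing idea is a reality cancellation, not an integration by parts. In your sum $-\sum_{l=0}^{2m+1}\langle uD^l\Pi u|D^{2m+1-l}\Pi u\rangle$ the terms $l$ and $2m+1-l$ are complex conjugates of one another (because $u$ is real), so the cubic contribution equals $-\sum_l\Re\langle uD^l\Pi u|D^{2m+1-l}\Pi u\rangle$. Expanding $\Pi u=\frac{u+iHu}{2}$, each Hilbert transform comes with a factor $\pm i$, and since the total number of derivatives $2m+1$ is odd, the powers of $i$ coming from $D=-i\partial_x$ and from the conjugation in the inner product contribute an overall factor $\pm i$; as all the resulting integrands are products of real functions, taking the real part retains exactly the monomials with an \emph{odd} number of Hilbert transforms. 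Since integration by parts preserves the number of $H$'s, and the one-$H$ class modulo admissible terms is spanned by $\int u(H\partial_x^mu)(\partial_x^{m+1}u)$ alone (the pattern $\int(Hu)(\partial_x^mu)(\partial_x^{m+1}u)$ being an exact derivative), the unwanted two-$H$ term $\int(Hu)(H\partial_x^mu)(\partial_x^{m+1}u)$ is annihilated before any bookkeeping starts. This is precisely the step the paper performs when it observes that $\Re\langle\partial_x^{m+1}(u)\,Hu\,|\,i\partial_x^m(Hu)\rangle=0$ because $u$ and $Hu$ are real valued; that single line is what cuts your two candidate constants down to the one constant $c$ of the statement, and it has no substitute in your write-up.
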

Note that $\hamilton_{k+2}(u)=\half E_{k/2}(u)$ with the notation from \cite{TzvetkovVisciglia2014}.

\begin{proof}
Recall formula \eqref{eq:Hk_Lax2}
\begin{align*}
\hamilton_{k+2}(u)
	&=\langle L_u^{k}\Pi u|\Pi u\rangle\\
	&=\langle L_u^{m+1}\Pi u| L_u^{m+1}\Pi u\rangle\\
	&=\langle (D-T_u)^{m+1}\Pi u| (D-T_u)^{m+1}\Pi u\rangle,
\end{align*}
where
$D=-i\partial_x$ and $T_u:h\in L^2_+(\T)\mapsto\Pi(uh)$.

We expand $\hamilton_{k+2}(u)$ as a sum of terms depending on whether we applied the operator $D$ or the operator $T_u$ when applying $L_u$.

It is possible to decompose $\hamilton_{k+2}(u)$ as follows :
\[
\hamilton_{k+2}(u)
	=\half\|u\|_{\dot{H}^{m+1}(\T)}^2
	+A+B,
\]
where $\|\Pi u\|_{\dot{H}^{m+1}(\T)}^2=\half\|u\|_{\dot{H}^{m+1}(\T)}^2$ is obtained when one only applies operator $D$, $A$ is obtained when one applies only once the operator $T_u$ and $(2m+1)$ times the operator $D$
\begin{align*}
A
	&=-2\Re\left(\sum_{j=0}^m\langle D^{m-j}\Pi(u D^j\Pi u)|D^{m+1}\Pi u\rangle\right)\\
	&=-2\Re\left(\sum_{j=0}^m\langle D^{m-j}(u D^j\Pi u)|D^{m+1}\Pi u\rangle\right),
\end{align*}
and $B$ is obtained when we apply at least twice in total the operator $T_u$.

$\bullet$ We first prove that one can decompose $A$ as
\[
A
	=c\int_0^{2\pi} u(H\partial_x^mu)(\partial_x^{m+1}u)\d x
	+\widetilde{A}
\]
where for some real numbers $c(p)$,
\begin{equation}\label{eq:Atilde}
\widetilde{A}
	=\sum_{\substack{p(u)\in\mathcal{P}_3(u)\\\|p(u)\|=2m+1\\|p(u)|\leq m}}c(p)\int_0^{2\pi}p(u)\d x.
\end{equation}
Let $j\in\li 0,m\ri$. By integration by parts and Leibniz' formula,
\begin{align*}
\overline{\langle D^{m-j}(u D^j\Pi u)|
	D^{m+1}\Pi u\rangle}
	&=\langle D^{m+1-j}(u D^j\Pi u)|D^{m}\Pi u\rangle\\
	&=\langle u D^{m+1}(\Pi u)|D^{m}\Pi u\rangle
	+\langle D^{m+1-j} (u)D^j(\Pi u)|D^{m}\Pi u\rangle\\
	&+\sum_{k=1}^{m-j}\binom{m+1-j}{k}\langle D^{k}(u) D^{m+1-k}(\Pi u)|D^{m}\Pi u\rangle.
\end{align*}
We take the real part and sum over the indices $j$. When distinguishing the cases $j=0$ and $j\geq1$, we see that for some suitable $\widetilde{A}$ as in \eqref{eq:Atilde}, $A$ decomposes as
\[
A=-2(m+1)\Re(\langle u D^{m+1}(\Pi u)|D^{m}\Pi u\rangle)
-2\Re(\langle D^{m+1} (u)\Pi u|D^{m}\Pi u\rangle)+\widetilde{A}.
\]
Write $\Pi u=\frac{u+iHu}{2}$, then there exists some real constants $c(\varepsilon_1,\varepsilon_2)$ such that
\begin{align*}
A
	=\sum_{\varepsilon_1,\varepsilon_2\in\{0,1\}}c(\varepsilon_1,\varepsilon_2)\int_0^{2\pi}u\partial_x^m(H^{\varepsilon_1}u)\partial_x^{m+1}(H^{\varepsilon_2}u)\d x
	-\Re(\langle D^{m+1}(u)iHu|D^m(\Pi u)\rangle)
	+\widetilde{A}.
\end{align*}

On the one hand, the terms in the sum are simplified as follows (see the remark from Tzvetkov and Visciglia \cite{TzvetkovVisciglia2014}). When $\varepsilon_1=\varepsilon_2$,
\begin{align*}
\int_0^{2\pi}u\partial_x^m(H^{\varepsilon_1}u)\partial_x^{m+1}(H^{\varepsilon_1}u)\d x
	&=\half\int_0^{2\pi}u\partial_x((\partial_x^m(H^{\varepsilon_1}u))^2)\d x\\
	&=-\half\int_0^{2\pi}\partial_x(u)(\partial_x^m(H^{\varepsilon_1}u))^2\d x
\end{align*}
so this term is a remainder term to be added to $\widetilde{A}$. Moreover, by integration by parts,
\begin{align*}
\int_0^{2\pi}u\partial_x^m(u)\partial_x^{m+1}(Hu)\d x
	&=-\int_0^{2\pi}\partial_x(u)\partial_x^m(u)\partial_x^{m}(Hu)\d x-\int_0^{2\pi}u\partial_x^{m+1}(u)\partial_x^{m}(Hu)\d x.
\end{align*}
Therefore, the sum can be written as a linear combination of the term $\int_0^{2\pi} u(H\partial_x^mu)\partial_x^{m+1}u\d x$ and other terms that can be added to the remainder $\widetilde{A}$.

On the other end,
\begin{align*}
\Re(\langle D^{m+1}(u)iHu|D^m(\Pi u)\rangle)
	&=\Re(\langle \partial_x^{m+1}(u)Hu|\partial_x^m(\Pi u)\rangle)\\
	&=\half\left(\Re(\langle \partial_x^{m+1}(u)Hu|\partial_x^m(u)\rangle)
	+\Re(\langle \partial_x^{m+1}(u)Hu|i\partial_x^m(H u)\rangle)\right)
\end{align*}
Since $u$ is real valued, so is $Hu$, therefore
\[
\Re(\langle \partial_x^{m+1}(u)Hu|i\partial_x^m(H u)\rangle)=0.\]
By integration by parts, we then write
\begin{align*}
\Re(\langle D^{m+1}(u)iHu|D^m(\Pi u)\rangle)
	&=\frac{1}{4\pi}\int_0^{2\pi}\partial_x^{m+1}(u)Hu\partial_x^m(u)\d x\\
	&=-\frac{1}{8\pi}\int_0^{2\pi}\partial_x(Hu)(\partial_x^m(u))^2\d x
\end{align*}
as a remainder term to be added to $\widetilde{A}$.

$\bullet$ We now tackle term $B$ , for which we have applied $T_u$ at least twice. We show that it can be written for some real numbers $c(p)$ as a sum
\[
B=\sum_{j=4}^{2m+4}\sum_{\substack{p(u)\in\mathcal{P}_j(u)\\\|p(u)\|=2m+4-j\\|p(u)|\leq m}}c(p)\int_0^{2\pi}p(u)\d x.
\]
 Let $\widetilde{B}$ be one of the terms in $B$ obtained by applying $T_u$ $(j-1)$ times on the left side and $(k-1)$ times on the right side.

Assume that we have applied $T_u$ at least once in each side of the brackets, i.\@e.\@ $j-1\in\li 1,m+1\ri$ and $k-1\in\li 1,m+1\ri$. Then we can apply Leibniz' rule and decompose the left side as a complex linear combination of terms of the form $p(u)$ where $p(u)\in\mathcal{P}_j(u)$, $\|p(u)\|=m+2-j$ and $|p(u)|\leq m$ (for the right side we just replace $j$ by $k$). The term $\widetilde{B}$ is therefore a complex linear combination of terms $\int_0^{2\pi}p(u)\d x$, where $p(u)\in\mathcal{P}_l(u)$ for some $l=j+k\in\li 4,2m+4\ri$, $\|p(u)\|=2m+4-l$ and $|p(u)|\leq m$.

Otherwise, we have applied $T_u$ at least twice in the same side of the brackets, let us say the left, and we only have applied the operator $D$ on the other side : $j-1\in\li 2,m+1\ri$ and $k-1=0$. Again by Leibniz' rule, $\widetilde{B}$ decomposes as a sum 
\[
\widetilde{B}=\sum_{j= 3}^{m+2}\sum_{\substack{p(u)\in\mathcal{P}_j(u)\\\|p(u)\|=m+2-j\\|p(u)|\leq m-1}}c(p)\langle p(u)|D^{m+1}\Pi u\rangle.
\]
But then by integration by parts and Leibniz' rule again,
\begin{align*}
\widetilde{B}
	&=\sum_{j= 3}^{m+2}\sum_{\substack{p(u)\in\mathcal{P}_j(u)\\\|p(u)\|=m+2-j\\|p(u)|\leq m-1}}c(p)\overline{\langle Dp(u)|D^{m}\Pi u\rangle}\\
	&=\sum_{j= 3}^{m+2}\sum_{\substack{p(u)\in\mathcal{P}_j(u)\\\|p(u)\|=m+3-j\\|p(u)|\leq m}}c'(p)\overline{\langle p(u)|D^{m}\Pi u\rangle}\\
	&=\sum_{j= 4}^{m+3}\sum_{\substack{p(u)\in\mathcal{P}_j(u)\\\|p(u)\|=2m+4-j\\|p(u)|\leq m}}c''(p)\int_0^{2\pi}p(u)\d x,
\end{align*}
which is of the desired form.
\end{proof}

\bibliography{/home/gassot/Documents/these/references/mybib.bib}{}
\bibliographystyle{abbrv}
\Addresses

\end{document}